\documentclass[11pt,a4paper]{article}
%%%%%%%%%%%%%%%%%%%%%%%%%%%%%%%%%%%%%%%%%%%%%%%%%%%%%%%%%%%%%%%%%%%%%%%%%%%%%%%%%%%%%%%%%%%%%%%%%%%%%%%%%%%%%%%%%%%%%%%%%%%%%%%%%%%%%%%%%%%%%%%%%%%%%%%%%%%%%%%%%%%%%%%%%%%%%%%%%%%%%%%%%%%%%%%%%%%%%%%%%%%%%%%%%%%%%%%%%%%%%%%%%%%%%%%%%%%%%%%%%%%%%%%%%%%%
\usepackage[english]{babel}
\usepackage{fancyhdr}
\usepackage[latin1]{inputenc}
\usepackage{amsmath}
\usepackage{amsfonts}
\usepackage{amssymb}
\usepackage{graphicx}
\usepackage{latexsym}
\usepackage{amsthm}
\usepackage{color}
\usepackage{midpage}
\usepackage[width=15.00cm]{geometry}
\usepackage{amscd}
\usepackage{tikz-cd}
\usepackage{hyperref}

\setcounter{MaxMatrixCols}{10}
%TCIDATA{OutputFilter=LATEX.DLL}
%TCIDATA{Version=5.50.0.2960}
%TCIDATA{<META NAME="SaveForMode" CONTENT="1">}
%TCIDATA{BibliographyScheme=Manual}
%TCIDATA{LastRevised=Wednesday, January 15, 2020 17:06:59}
%TCIDATA{<META NAME="GraphicsSave" CONTENT="32">}

\setlength{\textwidth}{16cm}
\setlength{\textheight}{23cm}
\setlength{\topmargin}{-1cm}
\setlength{\oddsidemargin}{-1mm}
\setlength{\evensidemargin}{-1mm}

\numberwithin{equation}{section}
\theoremstyle{plain}
\newtheorem{thm}{Theorem}[section]
\theoremstyle{remark}
\newtheorem{rem}{Remark}[section]
\theoremstyle{cor}
\newtheorem{cor}[thm]{Corollary}
\theoremstyle{lem}
\newtheorem{lem}[thm]{Lemma}
\theoremstyle{prop}

\DeclareMathOperator{\Var}{Var}
\providecommand{\keywords}[1]{\textbf{Keywords and Phrases:} #1}
\providecommand{\classification}[1]{\textbf{AMS Classification:}#1}

\begin{document}

\author{{Anna Paola Todino} \\
%EndAName
{\small Ruhr-Universit\"at Bochum, Germany}\\
{\small anna.todino@ruhr-uni-bochum.de}}
\date{}
\title{Limiting Behavior for the Excursion Area of Band-Limited Spherical
Random Fields}
%\pagestyle{fancy}
%\thispagestyle{empty}
%\fancyhead[R]{Excursion Area of Band-Limited Spherical
%Random Fields}
\maketitle

\begin{abstract}
In this paper, we investigate some geometric functionals for band-limited
Gaussian and isotropic spherical random fields in dimension 2. In
particular, we focus on the area of excursion sets, providing its behavior
in the high energy limit. Our result is based on Wiener chaos expansion for
non linear transform of Gaussian fields and on an explicit derivation on the
high-frequency limit of the covariance function of the field. As a simple corollary we establish also the Central Limit Theorem for the excursion area.

\end{abstract}

%\date{}

\begin{itemize} 
\item \keywords{} %Random Fields, Gaussian Eigenfunctions, Spherical Harmonics, Quantitative Central Limit Theorem.
%\item \classification{} %60G60, 42C10, 33C55, 60B10.
Gaussian Eigenfunctions, Excursion Area, Wiener-chaos expansion, Hilb's asymptotics, Central Limit Theorem.

\item \classification{} 60G60, 42C10, 33C55, 60F05.
\end{itemize}

\section{Introduction and Background}
Let $\{T_{\ell}(x),x\in \mathbb{S}^{2}\}$ denote the spherical harmonics, which are solutions of the Helmholtz equation: 
\begin{equation*}
\Delta _{\mathbb{S}^{2}}T_{\ell  }(x)+\ell (\ell +1)T_{\ell  }(x)=0,\mbox{ }\ell
=1,2,\dots;
\end{equation*}
where $\Delta _{\mathbb{S}^{2}}$ is the spherical Laplacian. We can put on these eigenfunctions a random structure such that $\{T_{\ell }(x),x\in \mathbb{S}^{2}\}$ are isotropic, centred Gaussian, with covariance function given by
\begin{equation*}
\mathbb{E}[T_{\ell }(x)T_{\ell }(y)]=\frac{2\ell +1}{4\pi }P_{\ell }(\cos
d(x,y)),
\end{equation*}
where $P_{\ell }$ is the Legendre polynomial and $d(x,y)$ the spherical
geodesic distance between $x$ and $y$, $d(x,y)=\arccos (\langle x,y\rangle )$.
After choosing a standard basis $\{Y_{\ell m}(x)\}$ of $L^2(\mathbb{S}^2)$, the random fields $\{T_{\ell }(x)\}$ can be expressed by
\begin{equation*}
T_{\ell }(x)=\sum_{m=-\ell }^{\ell }a_{\ell m}Y_{\ell m}(x),
\end{equation*}
where  $\{a_{\ell m}\}$ is the array of random spherical harmonic coefficients, which are independent, safe for the condition $\bar{a}_{\ell m} = (-1)^m a_{\ell,-m};$ for $m \ne0$ they are standard complex-valued Gaussian variables, while $a_{\ell 0}$ is a standard real-valued Gaussian variable. They satisfy
\begin{equation*}
\mathbb{E}[a_{\ell m}\bar{a}_{\ell ^{\prime }m^{\prime }}]=\delta _{\ell
}^{\ell ^{\prime }}\delta _{m}^{m^{\prime }}.
\end{equation*}
The geometry of the excursion sets of random eigenfunctions has been studied in many different papers, among them \cite{M e W 2012}, \cite{M e W 2011bis}, \cite{M e W 2011}, \cite{MRW2017}, \cite{W}, \cite{CMW2016F}, \cite{CM18}, \cite{35} and \cite{Todino1}, \cite{Todino2} for subdomains of $\mathbb{S}^2$ (see also \cite{M e Mau 2015} and \cite{Rossi} for the d-dimensional sphere $\mathbb{S}^d$). In this framework we aim to extend these results to the case of 
band-limited functions (see also \cite{BW},
\cite{NS09}, \cite{NS17}, \cite{SW19}, \cite{SW14}).\\

Hence let us consider the sequence $\alpha_{n,\beta}$ given by
\begin{equation}\label{alpha}
\alpha_{n,\beta}:= \sqrt{1-\frac{1}{n^\beta}}
\end{equation}
with $0 < \beta < 1, \beta \in \mathbb{R}.$ The band-limited functions here are random fields $\{\bar{T}_{\alpha_{n,\beta} }(x),x\in \mathbb{S}^{2}\}$ defined as 
\begin{equation}
\bar{T}_{\alpha_{n,\beta} }(x)=\sqrt{C_{n,\beta }}\sum_{\ell =\alpha_{n,\beta} n}^{n}T_{\ell
}(x),  \label{1}
\end{equation}%
where 
\begin{equation*}
C_{n,\beta }:=\frac{4\pi }{n^{2}(1-\alpha_{n,\beta} ^{2})+2n+1}= \frac{4\pi}{n^{2-\beta}+2n+1}.
\end{equation*}%
$\{\bar{T}_{\alpha_{n,\beta} }(x)\}$ are centred Gaussian with $\mathbb{E}[\bar{T}_{\alpha_{n,\beta} }(x)^{2}]=1$ and covariance
function given by 
\begin{equation}
\bar{\Gamma}_{\alpha_{n,\beta} }(x,y)= C_{n,\beta } \left(\sum_{\ell=\alpha_{n,\beta} n}^n \mathbb{E}[T_\ell(x)T_\ell(y)]\right)=C_{n,\beta }\sum_{\ell =\alpha_{n,\beta} n}^{n}\frac{%
	2\ell +1}{4\pi }P_{\ell }(\cos d(x,y)).  \label{cov}
\end{equation}% 
We consider the excursion sets, defined as: 
\begin{equation}
A_{u}(\bar{T}_{\alpha_{n,\beta} }):=\{x\in \mathbb{S}^{2}:\bar{T}_{\alpha_{n,\beta} }(x)\geq u\},
\label{exsets}
\end{equation}%
with $u\in \mathbb{R}$, $u \ne 0$; in this paper we focus on the area of these regions, which we denote by $S_{\alpha_{n,\beta}}(u)$. Along the lines of \cite{M e W 2011}, it can be written as a functional of the random field itself in the following way
\begin{equation*}
S_{\alpha_{n,\beta}}(u)=\int_{\mathbb{S}^2} 1_{\{\bar{T}_{\alpha_{n,\beta}} (x) >u\}} \,dx,
\end{equation*}
where $1(\cdot)$ is, as usual, the characteristic function which takes value
one if the condition in the argument is satisfied, zero otherwise.  This
expression allows to project the area into the orthonormal system generated by
Hermite polynomials (Wiener chaoses projection) $H_{k}(u)$, $k \in \mathbb{N}$, that is
\begin{equation}
H_{0}(u)=1, H_{1}(u)=u, H_{2}(u)=u^{2}-1, \dots, H_{k}(t) = t H_{k-1}(t) - H^\prime_{k-1} (t ), k \geq 1.
\end{equation}
Indeed, since $1(\cdot) \in L^2(\mathbb{S}^2)$, it can
be expanded as
\begin{equation*}
1_{\{\bar{T}_{\alpha_{n,\beta}} (x) >u\}}=\sum_{q=0}^{\infty} \frac{J_q(u)}{q!} H_q(\bar{%
	T}_{\alpha_{n,\beta}}(x) ),
\end{equation*}
in $L^2(\Omega).$
The coefficients $\{J_q(\cdot)\}$ have the analytic expressions $%
J_0(u)=\Phi(u),$ $J_1(u)=-\phi(u),$ $J_2(u)=-u\phi(u),$ $J_3(u)=(1-u^2)%
\phi(u)$ and in general 
\begin{equation*}
J_q(u)=-H_{q-1}(u)\phi(u),
\end{equation*}
where $\phi(\cdot)$ and $\Phi(\cdot)$ are the density function and the
distribution function of a standard Gaussian variable (\cite{M e W 2011} and 
\cite{Peccati Nourdin}).
It follows that 
\begin{equation}\label{expansion}
S_{\alpha_{n,\beta}}(u)= \sum_{q=0}^{\infty} \dfrac{J_q(u)}{q!} \int_{\mathbb{S}^2} H_q(%
\bar{T}_{\alpha_{n,\beta}} (x) ) \, dx.
\end{equation}
Note that if we consider $\beta \equiv 1$, the random field in (\ref{1}) is the eigenfunction $T_\ell$ and it was shown in \cite{M e W 2011} that, in this case, 
the projection on the first component vanishes identically and the whole series in (\ref{expansion}) is dominated simply by the second chaotic component. More explicitly,
the variance of this single term is asymptotically equivalent to the variance of the full series, and its asymptotic distribution (Gaussian) gives also the limiting behavior of the excursion area. 
On the contrary, when $\beta \equiv 0$, the expansion in (\ref{expansion}) does not have any leading component, namely, each chaotic component has the same asymptotic behavior (as it happens
for the defect case, defined as the difference between positive and negative
regions, when only one eigenfunction is considered (see \cite{M e W
	2011bis}, \cite{Rossi})). 
It could
then be suspected that the limiting behavior may depend on the value of $%
\beta $, but this turns out not to be the case. Indeed, we will prove that for any $0<\beta< 1$ the second chaotic
component is still the leading term of the series expansion in (\ref{expansion}) and so no phase
transition with respect to $\beta$ arises.

\section{Main Result}
Let us consider the expansion of the excursion area given in (\ref{expansion}),
we can write
\begin{equation} \label{serie}
\begin{split}
S_{\alpha_{n,\beta}}(u) &= (1-\Phi(u) )\int_{\mathbb{S}^2}\,dx + \phi(u) \int_{\mathbb{S}%
	^2} H_1(\bar{T}_{\alpha_{n,\beta}} (x) ) \,dx\\&+ u\phi(u) \frac{1}{2} \int_{\mathbb{S}^2}
H_2(\bar{T}_{\alpha_{n,\beta}} (x) ) \,dx 
+ \sum_{q=3}^{\infty} \dfrac{J_q(u)}{q!} \int_{\mathbb{S}^2} H_q(\bar{T}
_{\alpha_{n,\beta}} (x) )\,dx,
\end{split}
\end{equation}
in the $L^2(\Omega)-$convergence sense.
Denoting 
\begin{equation*}
h_{{\beta};q}:=\int_{\mathbb{S}^2} H_q(\bar{T}_{\alpha_{n,\beta}} (x) ) \, dx \mbox{ }%
\mbox{ }\mbox{ }\mbox{ }\mbox{ }\mbox{ }\mbox{ } q=1,2,\dots,
\end{equation*}
we have that 
\begin{equation*}
S_{\alpha_{n,\beta}}(u)=\sum_{q=0}^{\infty} \dfrac{J_q(u)}{q!} h_{{\beta};q}.
\end{equation*}
\begin{rem}
	Note that $\int_{\mathbb{S}^2} H_1(\bar{T}_{\alpha_{n,\beta}} (x)) \,dx=0$, indeed 
	\begin{equation*}
	\int_{\mathbb{S}^2} H_1(\bar{T}_{\alpha_{n,\beta}} (x)) \,dx= \int_{\mathbb{S}^2} \bar{T}%
	_{\alpha_{n,\beta}} (x) \,dx= \sqrt{C_{n,{\beta}}} \sum_{\ell={\alpha_{n,\beta}} n}^{n} \int_{\mathbb{%
			S}^2} T_\ell(x) \, dx=0
	\end{equation*}
	thanks to the orthogonal property of Spherical Harmonics (see \cite{M e Peccati}, page 66).
\end{rem}
\begin{rem}
	The choice of $C_{n,{\beta}}$ in (\ref{cov}) is such that $%
	\Var[\bar{T}_{\alpha_{n,\beta}}(x)]=1$. Indeed,  
	\begin{equation*}  \label{key}
	\begin{split}
	\Var&[{\bar{T}_{\alpha_{n,\beta}}(x)}]=\!\dfrac{4\pi}{n^2(1-{\alpha^2_{n,\beta}})+2n+1}
	\!\!\sum_{\ell={\alpha_{n,\beta}} n}^{n}\!\!\![\Var{T_\ell(x)}]=\dfrac{4\pi}{n^2(1-{\alpha^2_{n,\beta}})+2n+1}\!\!\!
	\sum_{\ell={\alpha_{n,\beta}} n}^{n} \!\!\!\dfrac{2\ell+1}{4\pi} \\
	&=\dfrac{1}{n^2(1-{\alpha^2_{n,\beta}})+2n+1} \bigg[ 2\bigg(\dfrac{n(n+1)}{2} - \dfrac{%
		({\alpha_{n,\beta}} n-1)({\alpha_{n,\beta}} n)}{2}\bigg) +( n-{\alpha_{n,\beta}} n+1)\bigg] \\
	&= \dfrac{1}{n^2(1-{\alpha^2_{n,\beta}})+2n+1} ( n^2+n -{\alpha^2_{n,\beta}} n^2+{\alpha_{n,\beta}} n+n -{\alpha_{n,\beta}}
	n +1)=1.
	\end{split}%
	\end{equation*}
\end{rem}
The main result of this paper gives
the high energy behavior of the variances of $h_{\beta; q}$ for $q\geq2$.
\begin{thm}
	\label{main} 	For $0 < \beta < 1,$
	as $n \rightarrow \infty$, 
	\begin{align}
	\Var(h_{\beta; 2})&=%
	\dfrac{32\pi^2}{n^{2-\beta}}- \dfrac{64\pi^2}{n^{3-2\beta}} +o\left(\frac{1}{n^{3-2\beta}}\right) & \mbox{ for } q=2, \\ 
	\Var(h_{\beta;q})& = 
	O\left( \dfrac{1}{n^{2}}\right) & \mbox{  for  } q
	= 3 \mbox{ and } q \geq 5 \\
	\Var(h_{\beta;4})&=O\left( \dfrac{ \log n}{n^{2}}\right) & \mbox{  for  } q
	= 4.
	\end{align}
\end{thm}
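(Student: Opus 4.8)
The plan is to reduce the theorem to the moments of the covariance function and then estimate these via the classical asymptotics of Legendre polynomials. Since $\mathbb{E}[H_q(\bar T_{\alpha_{n,\beta}}(x))]=0$ for $q\ge 1$ and $\mathbb{E}[H_q(X)H_q(Y)]=q!\,\rho^q$ for jointly standard Gaussian $(X,Y)$ of correlation $\rho$, Fubini together with isotropy gives
\begin{equation*}
\Var(h_{\beta;q})=q!\int_{\mathbb{S}^2}\!\int_{\mathbb{S}^2}\bar\Gamma_{\alpha_{n,\beta}}(x,y)^q\,dx\,dy=8\pi^2\,q!\int_0^\pi\bar\Gamma_{\alpha_{n,\beta}}(\cos\theta)^q\sin\theta\,d\theta ,
\end{equation*}
so the theorem is entirely about this last integral. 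For $q=2$ no asymptotics are needed: by the reproducing property $\int_{-1}^1 P_\ell(t)P_{\ell'}(t)\,dt=\tfrac{2}{2\ell+1}\delta_\ell^{\ell'}$ (equivalently, orthonormality of the $Y_{\ell m}$),
\begin{equation*}
\int_{\mathbb{S}^2}\!\int_{\mathbb{S}^2}\bar\Gamma_{\alpha_{n,\beta}}(x,y)^2\,dx\,dy=C_{n,\beta}^2\sum_{\ell=\alpha_{n,\beta}n}^n(2\ell+1)=4\pi C_{n,\beta},
\end{equation*}
using $\sum_{\ell=\alpha_{n,\beta}n}^n(2\ell+1)=4\pi/C_{n,\beta}=n^{2-\beta}+2n+1$, which is exactly the identity behind the normalisation $\Var[\bar T_{\alpha_{n,\beta}}(x)]=1$ recalled above. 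Hence $\Var(h_{\beta;2})=8\pi C_{n,\beta}=32\pi^2/(n^{2-\beta}+2n+1)$, and expanding $(1+2n^{\beta-1}+n^{\beta-2})^{-1}=1-2n^{\beta-1}+O(n^{2\beta-2})$ yields the stated expansion, the discarded terms being $O(n^{-(4-2\beta)})+O(n^{-(4-3\beta)})=o(n^{-(3-2\beta)})$ precisely because $\beta<1$.

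\textbf{High-frequency behaviour of the covariance ($q\ge3$).} Here Hilb's asymptotics do the work. Using the closed form $\sum_{\ell=0}^{L}(2\ell+1)P_\ell(\cos\theta)=(L+1)\tfrac{P_L(\cos\theta)-P_{L+1}(\cos\theta)}{1-\cos\theta}$ one rewrites
\begin{equation*}
\bar\Gamma_{\alpha_{n,\beta}}(\cos\theta)=\frac{C_{n,\beta}}{4\pi(1-\cos\theta)}\Big[(n+1)\big(P_n-P_{n+1}\big)(\cos\theta)-\alpha_{n,\beta}n\big(P_{\alpha_{n,\beta}n-1}-P_{\alpha_{n,\beta}n}\big)(\cos\theta)\Big],
\end{equation*}
and substitutes Hilb's formula $P_\ell(\cos\theta)=(\theta/\sin\theta)^{1/2}J_0((\ell+\tfrac12)\theta)+\delta_\ell(\theta)$, with $\delta_\ell(\theta)=O(\theta^2)$ for $\theta\le c/\ell$ and $O(\theta^{1/2}\ell^{-3/2})$ for $c/\ell\le\theta\le\pi-\varepsilon$ (plus the reflection $P_\ell(\cos(\pi-\psi))=(-1)^\ell P_\ell(\cos\psi)$ near $\theta=\pi$). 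Differencing the Bessel functions and using $J_0'=-J_1$ and $\tfrac{d}{ds}\big(sJ_1(s\theta)\big)=s\theta J_0(s\theta)$, one obtains, for $\theta$ bounded away from $\pi$,
\begin{equation*}
\bar\Gamma_{\alpha_{n,\beta}}(\cos\theta)\approx\frac{2}{n^{2-\beta}\theta}\big[(n+1)J_1((n+1)\theta)-\alpha_{n,\beta}n\,J_1(\alpha_{n,\beta}n\theta)\big],
\end{equation*}
with a total Hilb error $O(\theta^{1/2}n^{-3/2})$ accumulated over the $\asymp n^{1-\beta}$ summands. By the large-argument asymptotics of $J_1$ and $\cos A-\cos B=-2\sin\tfrac{A+B}2\sin\tfrac{A-B}2$, the bracket factors as a rapidly oscillating term $\sin\big(\tfrac{(1+\alpha_{n,\beta})n\theta}{2}+\text{phase}\big)$ times the slower factor $\sin\big(\tfrac{(1-\alpha_{n,\beta})n\theta}{2}\big)$; consequently $\bar\Gamma\approx J_0(n\theta)$ (so $|\bar\Gamma|\lesssim(n\theta)^{-1/2}$) on the band $1/n\lesssim\theta\lesssim n^{\beta-1}$, while $|\bar\Gamma|\asymp n^{\beta-3/2}\theta^{-3/2}$ for $\theta\gtrsim n^{\beta-1}$; the regime $\theta$ near $\pi$ is analogous, the extra $(-1)^\ell$ producing even faster decay.

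\textbf{Splitting the $\theta$-integral.} I would break $[0,\pi]$ into the cap $[0,1/n]$, where $|\bar\Gamma|\le1$ (a correlation) and $\int_0^{1/n}\sin\theta\,d\theta\asymp n^{-2}$, giving an $O(n^{-2})$ contribution for every $q$; the band $[1/n,n^{\beta-1}]$, where the substitution $u=n\theta$ turns the contribution into $n^{-2}\int_1^{n^\beta}u\,J_0(u)^q\,du$; and the tails $n^{\beta-1}\le\theta\le\pi-n^{\beta-1}$ (split at $\pi/2$, the half near $\pi$ via the reflection formula), controlled by the $\theta^{-3/2}$ decay together with one integration by parts against the rapidly oscillating factor. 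For $q\ge5$ one has $\int_1^\infty u|J_0(u)|^q\,du<\infty$, so the band and the tails are each $O(n^{-2})$; for $q=4$, $u|J_0(u)|^4\asymp u^{-1}$, hence $\int_1^{n^\beta}u\,J_0(u)^4\,du\asymp\log n$ (the critical exponent, $\cos^4$ having positive mean), giving $O(\log n/n^2)$ while all other ranges are $o(n^{-2})$; for $q=3$ the mere modulus only yields $O(n^{\beta/2-2})$, which is \emph{not} $O(n^{-2})$, so one must exploit cancellation: $u\,J_0(u)^3\sim(2/\pi u)^{1/2}\big(\tfrac34\cos(u-\tfrac\pi4)+\tfrac14\cos(3u-\tfrac{3\pi}4)\big)$ has vanishing mean, hence $\int_1^{n^\beta}u\,J_0(u)^3\,du=O(1)$ by a Fresnel-type bound, and likewise one integrates by parts once in the tails, recovering the missing factor $n^{-1}$; the outcome is $\Var(h_{\beta;3})=O(n^{-2})$.

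\textbf{Main obstacle.} The first paragraph is routine. The real work is the uniform-in-$\theta$ control of the high-frequency limit of $\bar\Gamma_{\alpha_{n,\beta}}$ — getting the correct error term after summing $\asymp n^{1-\beta}$ Legendre polynomials and matching the three natural scales $\theta\asymp1/n$, $\theta\asymp n^{\beta-1}$ and $\theta\asymp1$ — together with the oscillation bookkeeping in the angular integral. In particular, for $q=3$ the target $O(n^{-2})$ is strictly sharper than the $O(n^{\beta/2-2})$ coming from $|\bar\Gamma|^3$, so the proof genuinely has to use the sign changes of $\bar\Gamma^3$ (equivalently, the vanishing mean of odd powers of the cosine in the Bessel asymptotics), and for $q=4$ one has to pin down the logarithmic — rather than bounded — growth of the corresponding integral.
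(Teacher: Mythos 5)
Your proposal is correct and essentially reproduces the paper's argument: the exact reproducing-kernel computation for $q=2$ (giving $\Var(h_{\beta;2})=32\pi^2/(n^{2-\beta}+2n+1)$ before expanding), Hilb-type asymptotics for the band-limited covariance (you reach the bracket $(n+1)J_1((n+1)\theta)-\alpha_{n,\beta}n\,J_1(\alpha_{n,\beta}n\theta)$ via the Legendre Christoffel--Darboux identity plus $J_0$-differencing, while the paper gets the same expression directly from Hilb's formula for the Jacobi polynomial $P_n^{(1,0)}$ as in Lemma \ref{Lemma1}), the same three-scale splitting of the angular integral at $\theta\asymp 1/n$ and $\theta\asymp n^{\beta-1}$, modulus bounds giving $O(n^{-2})$ for $q\geq5$ and $O(\log n/n^{2})$ at $q=4$, and the zero-mean oscillation/integration-by-parts argument that is genuinely needed at $q=3$. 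The minor bookkeeping slips (the stated size of the accumulated Hilb error, the claim $\bar\Gamma\approx J_0(n\theta)$ on the band which omits the secondary term of relative size $\psi/(4n^{\beta})$, and the ``missing factor $n^{-1}$'' at $q=3$, where the deficit is actually $n^{-\beta/2}$) do not affect the strategy, which coincides with the paper's proof.
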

\begin{rem}From Theorem \ref{main} it follows that the second chaos is the leading term for all $\beta \in (0,1)$. 
	If $\beta =1$, the random field in (\ref{1}) is simply one random eigenfunction and this case has already been investigated in \cite{M e W 2011}, where it has been proved that the variance of $h_{1;2}$ is $32\pi^2\dfrac{2}{2\ell+1}$. 
\end{rem}
For the continuity of the norm and the orthogonality of the
Hermite polynomials,
the following expansion holds in the $L^2(\Omega)$ sense: 
\begin{equation*}  \label{serie-var}
%\begin{split}
\mbox{Var}\lbrack\ \!S_{\alpha_{n,\beta}}(u) ] \!= \!0 +\! 0+\! \dfrac{u^2\phi(u)^2}{4} 
\mbox{Var}\bigg[ \int_{\mathbb{S}^2} H_2(\bar{T}_{\alpha_{n,\beta}} (x)) \,dx %
\bigg] + \sum_{q=3}^{\infty} \dfrac{J_q(u)^2}{q!^2} \mbox{Var}\bigg[ \!\int_{%
	\mathbb{S}^2} \!H_q(\bar{T}_{\alpha_{n,\beta}} (x)) \,dx\bigg].
%\end{split}
\end{equation*}
Then, as a corollary, we get
\begin{cor} 
	\label{corollary} For $0 < \beta <1 $, as $n \rightarrow \infty,$  
	\begin{equation*}
	\Var(S_{\alpha_{n,\beta}}(u))= \dfrac{u^2\phi(u)^2}{4} \Var(h_{{\beta};2})+O \left(\frac{\log n}{n^{2}}\right)=32 \pi^2  \dfrac{u^2\phi(u)^2}{4} \dfrac{1}{n^{2-\beta}}+ R_\beta
	\end{equation*}
	where  
	\begin{equation}
	R_\beta= 
	\begin{cases}
	O\left(\dfrac{\log n}{n^2}\right) & \mbox{ if }  0 < \beta \leq 1/2, \\ 
	O\left(\dfrac{1}{n^{3-2\beta}}\right)  & \mbox{ if  }  \frac{1}{2}<\beta<1. \\ 
	%16 \pi^2 & \mbox{ if } \beta \geq 1 ;%
	\end{cases}%
	\end{equation}
\end{cor}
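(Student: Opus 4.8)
The plan is to read the corollary off Theorem~\ref{main} through the chaos decomposition of the variance already recorded just before the statement. By the isometry of the Wiener chaoses and continuity of the $L^2(\Omega)$ norm,
\[
\Var(S_{\alpha_{n,\beta}}(u))=\sum_{q=2}^{\infty}\frac{J_q(u)^2}{q!^2}\,\Var(h_{\beta;q}),
\]
the $q=0$ term being deterministic and the $q=1$ term vanishing identically by the first Remark. I would isolate the $q=2$ summand, which equals $\frac{u^2\phi(u)^2}{4}\Var(h_{\beta;2})$ since $J_2(u)=-u\phi(u)$; because $u\neq 0$ this term is of exact order $n^{-(2-\beta)}$, so to finish it suffices to show that the tail $\sum_{q\geq 3}$ is $O(n^{-2}\log n)$.

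For the tail I would use the standard isometry $\Var(h_{\beta;q})=q!\int_{\mathbb{S}^2}\int_{\mathbb{S}^2}\bar{\Gamma}_{\alpha_{n,\beta}}(x,y)^{q}\,dx\,dy$. The three terms $q=3,4,5$ are estimated individually: by Theorem~\ref{main} they contribute $\frac{J_q(u)^2}{q!^2}\Var(h_{\beta;q})=O(n^{-2}\log n)$ altogether (the worst case $q=4$), the prefactors $J_q(u)^2/q!^2$ being fixed constants. For $q\geq 6$, $|\bar{\Gamma}_{\alpha_{n,\beta}}|\le 1$ yields $\int\int\bar{\Gamma}_{\alpha_{n,\beta}}^{q}\le\int\int|\bar{\Gamma}_{\alpha_{n,\beta}}|^{q}\le\int\int\bar{\Gamma}_{\alpha_{n,\beta}}^{6}$, hence $\Var(h_{\beta;q})/q!\le\Var(h_{\beta;6})/6!=O(n^{-2})$ uniformly in $q\geq 6$, again by Theorem~\ref{main}. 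Combined with the Parseval identity for the chaos expansion of the indicator, $\sum_{q\geq 0}J_q(u)^2/q!=\mathbb{E}[\mathbf{1}_{\{Z>u\}}]=\mathbb{P}(Z>u)<\infty$ for $Z$ standard Gaussian, this gives
\[
\sum_{q\geq 6}\frac{J_q(u)^2}{q!^2}\Var(h_{\beta;q})=\sum_{q\geq 6}\frac{J_q(u)^2}{q!}\cdot\frac{\Var(h_{\beta;q})}{q!}\le\Big(\sup_{q\geq 6}\frac{\Var(h_{\beta;q})}{q!}\Big)\sum_{q\geq 6}\frac{J_q(u)^2}{q!}=O(n^{-2}).
\]
Altogether $\Var(S_{\alpha_{n,\beta}}(u))=\frac{u^2\phi(u)^2}{4}\Var(h_{\beta;2})+O(n^{-2}\log n)$, which is the first equality of the corollary.

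It then remains to substitute $\Var(h_{\beta;2})=32\pi^2 n^{-(2-\beta)}-64\pi^2 n^{-(3-2\beta)}+o(n^{-(3-2\beta)})$ from Theorem~\ref{main}. The leading part is $32\pi^2\frac{u^2\phi(u)^2}{4}n^{-(2-\beta)}$ and the remainder is $R_\beta=-16\pi^2 u^2\phi(u)^2\,n^{-(3-2\beta)}+o(n^{-(3-2\beta)})+O(n^{-2}\log n)$. Comparing the two competing rates closes the argument: if $3-2\beta\geq 2$, i.e. $0<\beta\leq 1/2$, then $n^{-(3-2\beta)}$ and its $o$-term are $O(n^{-2})=O(n^{-2}\log n)$ and $R_\beta=O(n^{-2}\log n)$; if $3-2\beta<2$, i.e. $1/2<\beta<1$, then $n^{-2}\log n=o(n^{-(3-2\beta)})$ because $n^{1-2\beta}\log n\to 0$, and $R_\beta=O(n^{-(3-2\beta)})$ --- exactly the stated dichotomy. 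The one delicate point is the uniform-in-$q$ control of the infinite tail $\sum_{q\geq 6}$: one must know that $\Var(h_{\beta;q})=O(q!\,n^{-2})$ with a constant independent of $q$ (obtained here cheaply from $|\bar{\Gamma}_{\alpha_{n,\beta}}|\le 1$ and the $q=6$ instance of Theorem~\ref{main}), since an estimate valid only for each fixed $q$ would leave open that the series reassembles to the same order $n^{-(2-\beta)}$ as the main term. The rest is bookkeeping built on Theorem~\ref{main}.
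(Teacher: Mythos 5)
Your proposal is correct and follows essentially the same route as the paper: the orthogonality of the Wiener chaoses reduces the variance of $S_{\alpha_{n,\beta}}(u)$ to the series $\sum_{q\geq 2}\frac{J_q(u)^2}{q!^2}\Var(h_{\beta;q})$, into which the estimates of Theorem~\ref{main} are substituted, and the dichotomy for $R_\beta$ is just the comparison of $n^{-(3-2\beta)}$ with $n^{-2}\log n$. Your explicit uniform-in-$q$ control of the tail $\sum_{q\geq 6}$, via $|\bar{\Gamma}_{\alpha_{n,\beta}}|\leq 1$, the $q=6$ bound, and $\sum_q J_q(u)^2/q!<\infty$, is a point the paper leaves implicit, and it is a legitimate and welcome tightening rather than a different method.
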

The key role of the proof of Theorem \ref{main} is played by the derivation
of the asymptotic behaviour for the covariance function in (\ref{cov}) and
it is given in Lemma \ref{Lemma1}.\\

Denoting by $N$ the North Pole, we fix $x=N$ and, in view of the isotropy,
we can write $\bar{\Gamma}_{\alpha_{n,\beta}}(x,y)=\bar{\Gamma}_{\alpha_{n,\beta}}(\cos
\theta)$ with $\theta \in [0,\pi)$. Changing variable $\theta= \frac{\psi}{\alpha_{n,\beta} n}$,  the following lemma gives the
asymptotic behaviour in the high frequency limit of the covariance function
for $1<\psi \leq \alpha_{n,\beta} n (\pi-\epsilon)$, for any $\epsilon>0$.
\begin{lem}
	\label{Lemma1}  Given $\bar{\Gamma}_{\alpha_{n,\beta}}(x,y)$ as in (\ref{cov}%
	), for $0 < \beta <1,
	\beta \in \mathbb{R},$ we have that, for $1<\psi < n^{\beta}$, as $n \rightarrow \infty$,  
	\begin{equation*}  \label{asymptotic cov function}
	\begin{split}
	\bar{\Gamma}_{\alpha_{n,\beta}}\left(\cos \frac{\psi}{\alpha_{n,\beta} n}\right)
	&=\frac{C_{n,\beta}}{4\pi}  \frac{(\alpha_{n,\beta} n)^2}{\sqrt{\psi}}   \frac{1}{n^\beta} \sqrt{\frac{2}{\pi}} \frac{1}{2} \bigg(\!\!-\cos \left(\psi-\frac{3\pi}{4}\right) \frac{\psi}{4n^{\beta}}- \sin  \left(\psi-\frac{3\pi}{4}\right) \\&+O\left( \frac{1}{ \psi} \right)\bigg)=:\bar{\Gamma}_1;
	\end{split}
	\end{equation*}
	while, for $\psi \in [n^{\beta},\alpha_{n,\beta} n (\pi-\epsilon)]$ it is 
	\begin{equation*}\label{cov2}
	\begin{split}
	\bar{\Gamma}_{\alpha_{n,\beta}}&\left(\cos \frac{\psi}{\alpha_{n,\beta} n}\right) =\frac{C_{n,\beta}}{4\pi} 2
	\sqrt{\frac{2}{\pi}}\left( -2 \sin \left(\frac{\psi}{2}-\frac{3\pi}{4}+\frac{(n+1)}{2\alpha_{n,\beta} n} \psi\right) \sin\left(\frac{n+1}{2n\alpha_{n,\beta}}\psi-\frac{\psi}{2}\right)\right) \\&\times\left(\frac{(\alpha_{n,\beta} n )^2}{\psi^{3/2}} +  \sqrt{\psi} \right)
	+O\left(C_{n,\beta}(\alpha_{n,\beta}n)^2 \frac{1}{\psi^{5/2}}\right) =:\bar{\Gamma}_2.
	\end{split}
	\end{equation*}
\end{lem}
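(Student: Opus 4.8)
The plan is to collapse the sum in~\eqref{cov} to a few boundary terms and then apply Bessel-function asymptotics. Using the classical Legendre identity $\frac{d}{dx}\big(P_{\ell+1}(x)-P_{\ell-1}(x)\big)=(2\ell+1)P_\ell(x)$, the sum telescopes, so that with $\theta:=\psi/(\alpha_{n,\beta}n)$
\begin{equation*}
\bar\Gamma_{\alpha_{n,\beta}}\Big(\cos\tfrac{\psi}{\alpha_{n,\beta}n}\Big)=\frac{C_{n,\beta}}{4\pi}\Big[P'_{n+1}(\cos\theta)+P'_n(\cos\theta)-P'_{\alpha_{n,\beta}n}(\cos\theta)-P'_{\alpha_{n,\beta}n-1}(\cos\theta)\Big],
\end{equation*}
where $P'_m$ denotes the derivative of the $m$-th Legendre polynomial. (Equivalently one may insert Hilb's formula directly into each $P_\ell$ and pass from the resulting Bessel sum to an integral via $\int zJ_0(z)\,dz=zJ_1(z)$; the telescoped form is preferable because it is exact.) The problem thus reduces to the high-frequency asymptotics of $P'_m(\cos\theta)$ for $m\asymp n$, $\theta=\psi/(\alpha_{n,\beta}n)\in(1/(\alpha_{n,\beta}n),\pi-\epsilon]$.

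These I would derive from Hilb's formula $P_m(\cos\theta)=(\theta/\sin\theta)^{1/2}J_0((m+\tfrac12)\theta)+\varepsilon_m(\theta)$ by differentiating in $\theta$ (using $\frac{d}{d\theta}P_m(\cos\theta)=-\sin\theta\,P'_m(\cos\theta)$ and $J_0'=-J_1$) --- which requires a version of Hilb's estimate that also controls $\varepsilon'_m$, or equivalently a direct asymptotic for $P'_m(\cos\theta)$ --- obtaining
\begin{equation*}
P'_m(\cos\theta)=\frac{m+\tfrac12}{\sin\theta}\Big(\tfrac{\theta}{\sin\theta}\Big)^{1/2}J_1\big((m+\tfrac12)\theta\big)-\frac{J_0\big((m+\tfrac12)\theta\big)}{\sin\theta}\,\tfrac{d}{d\theta}\Big(\tfrac{\theta}{\sin\theta}\Big)^{1/2}-\frac{\varepsilon'_m(\theta)}{\sin\theta},
\end{equation*}
with the first term dominant. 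Substituting $J_1(z)=\sqrt{2/(\pi z)}\big(\cos(z-\tfrac{3\pi}{4})+O(z^{-1})\big)$ (legitimate since $(m+\tfrac12)\theta\ge\psi>1$), summing the four terms, and noting that $(n+\tfrac32)\theta$ and $(n+\tfrac12)\theta$ lie within $O(\theta)$ of $(n+1)\theta=\tfrac{n+1}{\alpha_{n,\beta}n}\psi$ while $(\alpha_{n,\beta}n\pm\tfrac12)\theta=\psi\pm\tfrac\theta2$, one arrives at
\begin{equation*}
\bar\Gamma_{\alpha_{n,\beta}}\Big(\cos\tfrac{\psi}{\alpha_{n,\beta}n}\Big)\approx\frac{C_{n,\beta}}{4\pi}\,\frac{2\sqrt{\alpha_{n,\beta}n}}{(\sin\theta)^{3/2}}\sqrt{\tfrac{2}{\pi}}\left[\sqrt{\tfrac{n+1}{\alpha_{n,\beta}n}}\cos\Big(\tfrac{n+1}{\alpha_{n,\beta}n}\psi-\tfrac{3\pi}{4}\Big)-\cos\Big(\psi-\tfrac{3\pi}{4}\Big)\right].
\end{equation*}

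It remains to expand the bracket according to the size of the phase increment $\eta:=\tfrac{n+1}{\alpha_{n,\beta}n}\psi-\psi=\tfrac{\psi}{2n^\beta}+o\big(\tfrac{\psi}{n^\beta}\big)$ (using $1-\alpha_{n,\beta}^2=n^{-\beta}$); this is exactly where the two regimes split. For $1<\psi<n^\beta$ one has $\eta=O(1)$ and $\theta\to0$: replacing $(\sin\theta)^{3/2}$ by $\theta^{3/2}(1+O(\theta^2))$ turns the amplitude into $\tfrac{(\alpha_{n,\beta}n)^2}{\psi^{3/2}}$, and Taylor-expanding with $a:=\psi-\tfrac{3\pi}{4}$, $\sqrt{\tfrac{n+1}{\alpha_{n,\beta}n}}=1+\tfrac{1}{4n^\beta}+\cdots$, $\cos(a+\eta)-\cos a=-\eta\sin a-\tfrac{\eta^2}{2}\cos a+O(\eta^3)$, the bracket becomes $\tfrac{\psi}{2n^\beta}\big(-\sin a-\tfrac{\psi}{4n^\beta}\cos a+O(1/\psi)\big)$, i.e.\ $\bar\Gamma_1$. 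For $\psi\in[n^\beta,\alpha_{n,\beta}n(\pi-\epsilon)]$ the increment $\eta$ need not be small, so I keep the product identity $\cos(a+\eta)-\cos a=-2\sin(a+\tfrac\eta2)\sin\tfrac\eta2$, in which $a+\tfrac\eta2=\tfrac\psi2-\tfrac{3\pi}{4}+\tfrac{n+1}{2\alpha_{n,\beta}n}\psi$ and $\tfrac\eta2=\tfrac{n+1}{2n\alpha_{n,\beta}}\psi-\tfrac\psi2$; here $\theta$ can be of order $1$, so $\tfrac{\sqrt{\alpha_{n,\beta}n}}{(\sin\theta)^{3/2}}$ has order $\tfrac{(\alpha_{n,\beta}n)^2}{\psi^{3/2}}+\sqrt\psi$ (the two summands being the $\theta\to0$ and $\theta\asymp1$ contributions, estimated via $\sin\theta\gtrsim\theta$ on $[0,\pi/2]$ and $\sin\theta\gtrsim1$ on $[\pi/2,\pi-\epsilon]$), which gives $\bar\Gamma_2$.

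The main difficulty, where essentially all the technical work sits, is controlling the remainders uniformly in $\psi$ over both ranges. They arise from: (i) the sub-leading term $-\tfrac{J_0}{\sin\theta}\tfrac{d}{d\theta}(\theta/\sin\theta)^{1/2}$ and the differentiated-Hilb error $-\tfrac{\varepsilon'_m(\theta)}{\sin\theta}$, to be estimated from quantitative bounds on $\varepsilon_m$ and $\varepsilon'_m$ ($|\varepsilon_m(\theta)|=O(\theta^{1/2}m^{-3/2})$ for $\theta\ge c/m$ and $\theta^2O(1)$ for $\theta\le c/m$); after multiplication by $C_{n,\beta}=O(n^{\beta-2})$ and summation over the four indices one checks these are of smaller order than $\bar\Gamma_1,\bar\Gamma_2$; (ii) the $O(z^{-1})$ tails of the Bessel asymptotics, which produce the $O(1/\psi)$ relative correction recorded in $\bar\Gamma_1$ and the $O\big(C_{n,\beta}(\alpha_{n,\beta}n)^2\psi^{-5/2}\big)$ in $\bar\Gamma_2$; (iii) the $O(\theta)$ mismatches of the four Bessel arguments from $(n+1)\theta$ and $\psi$, together with the Taylor remainders. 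The delicate bookkeeping is in the range $1<\psi<n^\beta$: one has to track which of the corrections $\tfrac{1}{4n^\beta}$ (amplitude), $-\tfrac{\eta^2}{2}\cos a$ with $\eta\approx\tfrac{\psi}{2n^\beta}$, and the $(\theta/\sin\theta)^{1/2}$-term genuinely contribute to $\bar\Gamma_1$, and which collapse into its remainder.
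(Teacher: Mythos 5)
Your plan is in substance the paper's own proof, repackaged: the telescoped quantity $P'_{n+1}+P'_{n}-P'_{\alpha_{n,\beta}n}-P'_{\alpha_{n,\beta}n-1}$ is exactly what the paper analyses, since $(m+1)P_m^{(1,0)}(x)=P'_{m+1}(x)+P'_m(x)$; the paper collapses the sum through the Christoffel--Darboux identity $\sum_{\ell=0}^{m}\frac{2\ell+1}{4\pi}P_\ell=\frac{m+1}{4\pi}P_m^{(1,0)}$ and then invokes Szeg\H{o}'s Hilb formula (\ref{hilbs}) for $P_m^{(1,0)}$, which comes with the explicit error bound (\ref{error}). That is precisely the ingredient your argument leaves open: you cannot legitimately differentiate Hilb's formula for $P_m$ without a quantitative bound on $\varepsilon'_m$, and you flag this but do not supply it. Since this is the one genuinely analytic input of the lemma, the proposal has a gap exactly there; the cleanest repair is the identity just quoted (equivalently, $\frac{d}{dx}P_m=\frac{m+1}{2}P_{m-1}^{(1,1)}$ plus the Hilb asymptotics for $P^{(1,1)}$), after which your computation and the paper's coincide: same $J_1$ expansion, same split at $\psi\asymp n^\beta$ with the difference-of-cosines either Taylor-expanded or written as a product, same expansions of $\sqrt{(n+1)/(\alpha_{n,\beta}n)}$ and of the $\theta$-dependent amplitude.

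Beyond that, the bookkeeping needs repair in three places. First, for $\psi\in[n^\beta,\alpha_{n,\beta}n(\pi-\epsilon)]$ the angle $\theta=\psi/(\alpha_{n,\beta}n)$ may be of order one, so the $\pm\theta/2$ mismatches of your four Bessel phases are not discardable $O(\theta)$ errors; they must be recombined pairwise via $\cos((m+\tfrac12)\theta-\tfrac{3\pi}{4})+\cos((m+\tfrac32)\theta-\tfrac{3\pi}{4})=2\cos\tfrac{\theta}{2}\cos((m+1)\theta-\tfrac{3\pi}{4})$, and the factor $2\cos\tfrac{\theta}{2}$ is what turns your $2/\sin\theta$ into the $(\sin\tfrac{\theta}{2})^{-1}$ of Hilb's formula; omitting it skews the amplitude by $\sec(\theta/2)$, harmless for the order-of-magnitude use of $\bar\Gamma_2$ but not for an asserted equality. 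Second, in the range $1<\psi<n^\beta$ your amplitude $2\sqrt{\alpha_{n,\beta}n}/(\sin\theta)^{3/2}$ equals $2(\alpha_{n,\beta}n)^2/\psi^{3/2}(1+O(\theta^2))$, not $(\alpha_{n,\beta}n)^2/\psi^{3/2}$: a factor $2$ silently disappears at the step where you claim to land on $\bar\Gamma_1$, and since the lemma is stated as an equality this constant (together with the $(\sin\tfrac{\theta}{2})^{-1}$ versus $(\sin\theta)^{-1}$ point above) has to be tracked consistently. Third, you keep only the leading term of the Bessel expansion with an $O(1/z)$ relative remainder; in the difference of the two boundary terms these remainders do not cancel and are of relative size $n^\beta/\psi^2$ compared with the retained main term, which is not $O(1/\psi)$ throughout $1<\psi<n^\beta$ --- this is why the paper carries the second term of (\ref{J1}) explicitly, and your error analysis should do the same.
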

Moreover, we observe that 
\begin{equation}\label{CLT}
\begin{split}
\int_{\mathbb{S}^2} H_2(\bar{T}_{\alpha_{n,\beta}}(x)) \, dx &= \int_{\mathbb{S}^2} (\bar{T}_{\alpha_{n,\beta}}(x)^2-1) \, dx= C_{n,\beta} \sum_{\ell}\sum_{\ell'} \int_{\mathbb{S}^2} T_\ell(x) T_{\ell'}(x) \, dx -4\pi \\&= C_{n,\beta} \sum_{\ell=\alpha_{n,\beta} n}^{n}\sum_{m=-\ell}^{\ell} |a_{\ell m}|^2 -4\pi ,%\\&= C_{n, \alpha_{n,\beta}} \left( \sum_{m=-\alpha_{n,\beta} n}^{\alpha_{n,\beta} n} |a_{n\alpha_{n,\beta} m }|^2+ \dots + \sum_{m=- n}^{ n} |a_{n\alpha_{n,\beta} m }|^2 \right)-4\pi \\&
%=C_{n, \alpha_{n,\beta}} \left( (n-n \alpha_{n,\beta}) \sum_{m=-\alpha_{n,\beta} n}^{\alpha_{n,\beta} n} |a_{n\alpha_{n,\beta} m }|^2+ (n-n\alpha_{n,\beta}-1) \sum_{m=-\alpha_{n,\beta} n-1}^{\alpha_{n,\beta} n+1} |a_{n\alpha_{n,\beta}+1 m }|^2 + \dots + |a_{n -n }|^2+ |a_{n n }|^2\right)-4\pi
\end{split}
\end{equation}
which are sums of independent Gaussian random variables; the mean of (\ref{CLT}) is zero and then, from Theorem \ref{main}, the Central Limit Theorem follows for $h_{\beta;2}$.
As a consequence, along the same lines as in \cite{M e Mau 2015}, we can establish the validity of the Central Limit Theorem for $S_{\alpha_{n,\beta}}$. Note that from (\ref{serie}) it is easy to see that $\mathbb{E}[S_{\alpha_{n,\beta}} (u)]= (1-\Phi(u))4\pi$.
\begin{cor}\label{TLC}
	For all $0 <\beta < 1$, as $n \rightarrow \infty$, $$\dfrac{S_{\alpha_{n,\beta}} (u)-\mathbb{E}[S_{\alpha_{n,\beta}} (u)]}{\sqrt{\Var[S_{\alpha_{n,\beta}} (u)]}} \rightarrow_d Z,$$
	where $Z \sim N(0,1)$ and $\rightarrow_d $ denote convergence in distribution.
\end{cor}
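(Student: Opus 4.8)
The plan is to deduce the Central Limit Theorem for $S_{\alpha_{n,\beta}}(u)$ from a Central Limit Theorem for its dominant chaotic component $h_{\beta;2}$, in the spirit of \cite{M e Mau 2015}; note that $u\neq0$ guarantees $u\phi(u)\neq0$, so the leading term of $\Var(S_{\alpha_{n,\beta}}(u))$ in Corollary \ref{corollary} is genuinely of order $n^{-(2-\beta)}$. \textbf{Step 1 (CLT for the second chaos).} By (\ref{CLT}) and the reality relation $\bar a_{\ell m}=(-1)^m a_{\ell,-m}$,
\begin{equation*}
h_{\beta;2}+4\pi = C_{n,\beta}\sum_{\ell=\alpha_{n,\beta}n}^{n} a_{\ell 0}^2 + 2C_{n,\beta}\sum_{\ell=\alpha_{n,\beta}n}^{n}\sum_{m=1}^{\ell} |a_{\ell m}|^2 ,
\end{equation*}
which exhibits $h_{\beta;2}$ as a linear combination, with coefficients of order $C_{n,\beta}=\Theta\big(n^{-(2-\beta)}\big)$, of the $\Theta\big(n^{2-\beta}\big)$ independent random variables $\{a_{\ell 0}^2\}\cup\{|a_{\ell m}|^2:1\le m\le\ell\}$, each with finite moments of every order. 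Since $\Var(h_{\beta;2})=\Theta\big(n^{-(2-\beta)}\big)$ by Theorem \ref{main}, the Lyapunov condition with exponent $4$ is immediate (the Lyapunov ratio is itself of order $n^{-(2-\beta)}$), so that $h_{\beta;2}/\sqrt{\Var(h_{\beta;2})}\rightarrow_d Z\sim N(0,1)$. Alternatively, since $h_{\beta;2}$ lies in the second Wiener chaos, one may invoke the fourth moment theorem and check $\mathbb{E}[h_{\beta;2}^4]/\Var(h_{\beta;2})^2\to3$.

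\textbf{Step 2 (higher chaoses are negligible).} Using $h_{\beta;1}=0$, the identity $\mathbb{E}[S_{\alpha_{n,\beta}}(u)]=4\pi(1-\Phi(u))$ and $J_2(u)=-u\phi(u)$, (\ref{serie}) gives
\begin{equation*}
S_{\alpha_{n,\beta}}(u)-\mathbb{E}[S_{\alpha_{n,\beta}}(u)] = -\frac{u\phi(u)}{2}\,h_{\beta;2} + r_n,\qquad r_n:=\sum_{q\ge3}\frac{J_q(u)}{q!}\,h_{\beta;q},
\end{equation*}
the two summands being uncorrelated by orthogonality of Wiener chaoses and $\mathbb{E}[h_{\beta;q}]=0$. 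For every $q\ge4$, since $|\bar{\Gamma}_{\alpha_{n,\beta}}|\le1$,
\begin{equation*}
\Var(h_{\beta;q})=q!\iint_{(\mathbb{S}^2)^2}\bar{\Gamma}_{\alpha_{n,\beta}}(x,y)^q\,dx\,dy\ \le\ q!\iint_{(\mathbb{S}^2)^2}|\bar{\Gamma}_{\alpha_{n,\beta}}(x,y)|^4\,dx\,dy ,
\end{equation*}
so that, combining the estimates of Theorem \ref{main} for $q=3,4$ with the summability of $\sum_{q\ge0}J_q(u)^2/q!$ (which holds because $1_{\{\,\cdot\,>u\}}\in L^2$), we obtain $\mathbb{E}[r_n^2]=\sum_{q\ge3}J_q(u)^2\Var(h_{\beta;q})/q!^2=O\big(\log n\,/\,n^2\big)$. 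As $\Var(S_{\alpha_{n,\beta}}(u))=\Theta\big(n^{-(2-\beta)}\big)$ and $0<\beta<1$, the normalised remainder has squared $L^2$-norm $O\big(\log n\cdot n^{-\beta}\big)\to0$, hence $r_n/\sqrt{\Var(S_{\alpha_{n,\beta}}(u))}\to0$ in probability.

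\textbf{Step 3 (conclusion).} By Corollary \ref{corollary} and Theorem \ref{main}, $\Var(S_{\alpha_{n,\beta}}(u))\big/\big(\tfrac{u^2\phi(u)^2}{4}\Var(h_{\beta;2})\big)\to1$, so the deterministic prefactor $-\tfrac{u\phi(u)}{2}\sqrt{\Var(h_{\beta;2})}\big/\sqrt{\Var(S_{\alpha_{n,\beta}}(u))}$ has square tending to $1$ and thus converges to a constant of modulus $1$. Writing
\begin{equation*}
\frac{S_{\alpha_{n,\beta}}(u)-\mathbb{E}[S_{\alpha_{n,\beta}}(u)]}{\sqrt{\Var(S_{\alpha_{n,\beta}}(u))}} = \Bigg(\!-\frac{\tfrac{u\phi(u)}{2}\sqrt{\Var(h_{\beta;2})}}{\sqrt{\Var(S_{\alpha_{n,\beta}}(u))}}\Bigg)\frac{h_{\beta;2}}{\sqrt{\Var(h_{\beta;2})}} + \frac{r_n}{\sqrt{\Var(S_{\alpha_{n,\beta}}(u))}},
\end{equation*}
the first term converges in distribution to $(\pm1)Z\sim N(0,1)$ by Step 1, the second tends to $0$ in probability by Step 2, and Slutsky's theorem yields the assertion. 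The main obstacle is Step 2, namely controlling the \emph{infinite} tail $\sum_{q\ge5}$ rather than only the finitely many chaoses treated in Theorem \ref{main}: the uniform bound $\Var(h_{\beta;q})\le q!\iint|\bar{\Gamma}_{\alpha_{n,\beta}}|^4$ for $q\ge4$, together with the summability of $\sum_q J_q(u)^2/q!$, is precisely what keeps the remainder of order $\log n/n^2$; once this and Step 1 are in hand, the remainder is a routine Slutsky argument.
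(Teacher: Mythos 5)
Your proposal is correct and follows essentially the same route as the paper: a CLT for the dominant second chaotic projection $h_{\beta;2}$ (a normalized sum of the independent squared coefficients $|a_{\ell m}|^2$, exactly as in (\ref{CLT})), negligibility of the $q\geq 3$ chaoses from the variance bounds of Theorem \ref{main} and Corollary \ref{corollary}, and Slutsky; the paper merely sketches this, deferring the details to \cite{M e Mau 2015}. Your explicit uniform-in-$q$ bound $\Var(h_{\beta;q})\leq q!\iint|\bar{\Gamma}_{\alpha_{n,\beta}}|^{4}$ for $q\geq4$, combined with $\sum_q J_q(u)^2/q!<\infty$, correctly supplies the tail control that the paper leaves implicit.
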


\begin{rem}
	When $\beta=0$ all the chaotic components have the same asymptotic behavior. Indeed, the idea is that the covariance function in this case behaves like
	\begin{equation}
	\bar{\Gamma}_{\alpha_{n,0}}\left(\cos \frac{\psi}{n}\right) \sim C_{n,0} \frac{1}{\sin (\psi/n)} \left(\frac{\psi/n}{\sin (\psi/n)}\right)^{1/2} (n+1) \sqrt{\frac{2}{(n+1)(\psi/n)}} \cos \left(\psi- \frac{3\pi}{4}\right)
	\end{equation} so that for all $q$  $$\Var(h_{0,q}) \sim \frac{C_q}{n^2} \int_{1}^{\pi n} \frac{1}{\psi^{\frac{3}{2}q-1}} \cos \left(\psi-\frac{3\pi}{4}\right)^q \,d\psi\sim \frac{C'_q}{n^2}$$ since the integral converges.
\end{rem}

\section{Proof of the Main Result (Theorem \protect\ref{main})}
\begin{proof}[Proof of Theorem \ref{main} assuming Lemma \ref{Lemma1}]
	First of all we remind the following
	property (see for instance \cite{M e Peccati}, page 98): let $Z_1, Z_2$ be jointly Gaussian; then, for all $q_1,
	q_2 \geq 0$ 
	\begin{equation}  \label{propertyH}
	\mathbb{E}[H_{q_1}(Z_1)H_{q_2}(Z_2)] = q_1!\delta_{q_1}^{q_2} \mathbb{E}[Z_1Z_2].
	\end{equation}
	Now we start computing the variance of $h_{\beta;2}$; hence,
	\begin{equation}\label{2}
	\begin{split}
	& \Var(h_{\beta;2})=\Var \bigg[ \int_{\mathbb{S}^2} H_2(\bar{T}_{{\alpha_{n,\beta}}}(x)) \, dx\bigg]=\mathbb{E} \bigg[ \int_{\mathbb{S}^2} H_2(\bar{T}_{{\alpha_{n,\beta}}}(x)) \, dx  \bigg]^2\\&= \mathbb{E} \bigg[ \int_{\mathbb{S}^2 \times\mathbb{S}^2  }\!\!\!\! H_2(\bar{T}_{{\alpha_{n,\beta}}}(x)) H_2(\bar{T}_{{\alpha_{n,\beta}}}(y))\, dx dy \bigg]=\int_{\mathbb{S}^2\times \mathbb{S}^2} \!\!\!\! \mathbb{E}[H_2(\bar{T}_{{\alpha_{n,\beta}}}(x)) H_2(\bar{T}_{{\alpha_{n,\beta}}}(y))] \,dx dy
	\end{split}
	\end{equation}
	which is, in view of (\ref{propertyH}), equal to
	$$\int_{\mathbb{S}^2  \times \mathbb{S}^2} 2\bar{\Gamma}_{\alpha_{n,\beta}}(x,y)^2 \,dx dy.$$ Using (\ref{cov}) we get
	\begin{equation}\label{3.3}
	\begin{split}
	\Var(h_{\beta;2})=2 C_{n,\beta}^2 \int_{\mathbb{S}^2  \times \mathbb{S}^2}\sum_{\ell={\alpha_{n,\beta}}n}^{n} \sum_{\ell'={\alpha_{n,\beta}} n}^{n}\frac{2\ell+1}{4\pi} P_\ell(\langle x,y \rangle) \frac{2\ell'+1}{4\pi} P_{\ell'}(\langle x,y \rangle) \, dx dy
	\end{split}
	\end{equation}
	and exchaging integrals and sums, and 
	%\begin{equation}\label{4}
	%\begin{split}
	%\Var(h_{{\alpha_{n,\beta}};2})=2 C_{n,{\alpha_{n,\beta}}}^2 \sum_{\ell={\alpha_{n,\beta}} n}^{n}\frac{2\ell+1}{4\pi} \sum_{\ell'={\alpha_{n,\beta}} n}^{n}\frac{2\ell'+1}{4\pi} \int_{ \mathbb{S}^2}  \,dx  \int_{ \mathbb{S}^2} P_\ell(\langle x,y \rangle) P_{\ell'}(\langle x,y \rangle) \,dy.
	%\end{split}
	%\end{equation}
	applying the duplication property (see \cite{M e Peccati}, Ch. 3), that is,
	$$ \int_{ \mathbb{S}^2} \dfrac{2\ell+1}{4\pi} P_\ell(\langle x,y \rangle) \dfrac{2\ell^\prime+1}{4\pi} P_{\ell^\prime}(\langle y,z \rangle) \,dy = \dfrac{2\ell+1}{4\pi} P_\ell(\langle x,z \rangle) \delta_{\ell}^{\ell^\prime},$$
	(\ref{3.3}) is equal to
	$$ 2 C_{n,\beta}^2 \sum_{\ell} \frac{2\ell+1}{4\pi} \int_{ \mathbb{S}^2} P_\ell(\langle x,x \rangle)\,dx$$
	and since $P_\ell(0)=1 \mbox{ } \mbox{ }\forall \ell$, we conclude that
	\begin{equation*}
	\begin{split}
	\Var(h_{\beta;2})  &=2 C_{n,\beta}^2 \sum_{\ell=\alpha_{n,\beta} n}^{n} \frac{2\ell+1}{4\pi} \int_{ \mathbb{S}^2} \,dx=2 C_{n,\beta}^2 \sum_{\ell=\alpha_{n,\beta} n}^{n} \frac{2\ell+1}{4\pi} 4\pi \\& =2 C_{n,\beta}^2 [( n(n+1)-\alpha_{n,\beta} n(\alpha_{n,\beta} n-1))+n+1-\alpha_{n,\beta} n ]
	\\&= 2 C_{n,\beta}^2(n^2(1-\alpha_{n,\beta}^2)+2n+1) =\dfrac{2 (4\pi)^2}{n^2(1-\alpha_{n}^2)+2n+1}= \dfrac{2(4\pi)^2}{n^{2-\beta}+2n+1}.
	\end{split}
	\end{equation*}
	Finally, Taylor expansion implies
	\begin{equation}\label{C2}
	\begin{split}
	\Var(h_{\beta;2})=& \dfrac{32\pi^2}{n^{2-\beta}} \left(  1-\frac{2}{n^{1-\beta}}+o\left(\frac{1}{n^{1-\beta}}\right)\right)= \dfrac{32\pi^2}{n^{2-\beta}}- \dfrac{64\pi^2}{n^{3-2\beta}} +o\left(\frac{1}{n^{3-2\beta}}\right).
	\end{split}
	\end{equation}
	Let us focus now on the variance of the chaotic components $h_{\beta;q}$, for $q>2$. Hence, same computations as in (\ref{2}) lead to
	\begin{equation*}\label{eq}
	\begin{split}
	\Var(h_{\beta;q})&=\Var\bigg(\int_{ \mathbb{S}^2}H_q(\bar{T}_{\alpha_{n,\beta}}(x))\,dx\bigg)=\int_{\mathbb{S}^2 \times \mathbb{S}^2}\mathbb{E}[H_q(\bar{T}_{\alpha_{n,\beta}}(x))H_{q'}(\bar{T}_{\alpha_{n,\beta}}(y))]\, dx dy
	\\&	=q!\int_{\mathbb{S}^2  \times \mathbb{S}^2} \bar{\Gamma}_{\alpha_{n,\beta}}(x,y)^q \,dx dy
	\end{split}
	\end{equation*}
	and for the isotropy it is
	\begin{equation}\label{integ}
	\begin{split}
	&	= 2\pi |\mathbb{S}^2|q!\int_{0}^{\pi} \bar{\Gamma}_{\alpha_{n,\beta}}(\cos \theta)^q \sin \theta d\theta= 2\pi |\mathbb{S}^2|q!\int_{0}^{1/n} \bar{\Gamma}_{\alpha_{n,\beta}}(\cos \theta)^q \sin \theta d\theta\\&+2\pi |\mathbb{S}^2 |q!\int_{1/n}^{1/{(\alpha_{n,\beta} n)}} \bar{\Gamma}_{\alpha_{n,\beta}}(\cos \theta)^q \sin \theta d\theta +
	2\pi |\mathbb{S}^2 |q!\int_{1/{(\alpha_{n,\beta} n)}}^{\pi} \bar{\Gamma}_{\alpha_{n,\beta}}(\cos \theta)^q \sin \theta d\theta.
	\end{split}
	\end{equation}
	Note that for $ \theta \in [0,1/n]$, since $| \bar{\Gamma}_{\alpha_{n,\beta}}(x,y)| \leq 1$, changing variable $\theta=\dfrac{\psi}{n}$, we have that 
	\begin{equation}\label{2var}
	\int_{0}^{1/n} \bar{\Gamma}_{\alpha_{n,\beta}}(\cos \theta)^q \sin \theta \,d\theta =O \left( \int_{0}^{1/n} |\sin \theta |\, d\theta \right)= O \left( \frac{1}{n} \int_{0}^{1} \frac{\psi}{n} \,d\psi \right)= O \left(\frac{1}{n^{2}} \right).
	\end{equation}
	In the same way
	\begin{equation}\label{3var}
	\begin{split}
	\int_{1/n}^{1/(\alpha_{n,\beta} n)} &\bar{\Gamma}_{\alpha_{n,\beta}}(\cos \theta)^q \sin \theta \,d\theta =O \left( \int_{1/n}^{1/(\alpha_{n,\beta} n)} |\sin \theta |\, d\theta \right)= O \left( \frac{1}{n} \int_{1}^{1/\alpha_{n,\beta}} \frac{\psi}{n} \,d\psi \right)\\&= O \left(\frac{1}{2n^{2}} \left(\frac{1}{\alpha_{n,\beta}^2}-1\right) \right) =O \left(\frac{1}{2n^{2}} \left(1+\frac{1}{n^\beta} -1\right)\right)=O\left( \frac{1}{n^{2+\beta}} \right)
	\end{split}
	\end{equation}
	where we have used the geometric series for $\frac{1}{\alpha_{n,\beta}^2}=\frac{1}{1-1/n^\beta}$.
	For the last integral in (\ref{integ}), changing the variables as $\theta= \frac{\psi}{\alpha_n n}$, we have that
	\begin{equation}
	\begin{split}
	&\int_{1/(\alpha_{n,\beta} n)}^{\pi} \bar{\Gamma}_{\alpha_{n,\beta}}(\cos \theta)^q \sin \theta d\theta = 
	\frac{1}{\alpha_{n,\beta} n}\int_{1}^{\pi\alpha_{n,\beta} n} \bar{\Gamma}_{\alpha_{n,\beta}}\left(\cos \frac{\psi}{\alpha_{n,\beta} n}\right)^q\sin \left(\frac{\psi}{\alpha_{n,\beta} n}\right)d\psi\\&
	=\frac{1}{\alpha_{n,\beta} n}\int_{1}^{n^{\beta}} \bar{\Gamma}_1^q\sin \left(\frac{\psi}{\alpha_{n,\beta} n}\right)d\psi+\frac{1}{{\alpha_{n,\beta} n}}\int_{n^{\beta}}^{\pi\alpha_{n,\beta} n} \bar{\Gamma}_{2}^q\sin \left(\frac{\psi}{\alpha_{n,\beta} n}\right)d\psi=:I_1+I_2.
	\end{split}
	\end{equation}
	From Lemma \ref{Lemma1}, taking the $q-$power, we see that
	\begin{equation}\label{gammaq1}
	\begin{split}
	\Gamma_1^q&=\frac{C_{n,\beta}^q}{(4\pi)^q}  \frac{(\alpha_{n,\beta} n)^{2q}}{\psi^{q/2}} \frac{1}{2^q}\sqrt{\frac{2}{\pi}}^q  \frac{1}{n^{\beta q}} \bigg(-\cos \left(\psi-\frac{3\pi}{4}\right) \frac{\psi}{4n^{\beta}}- \sin  \left(\psi-\frac{3\pi}{4}\right) \bigg)^q\\&+ O \left( C_{n,\beta}^q \frac{(\alpha_{n,\beta} n)^{2q}}{n^{\beta q}} \frac{1}{\psi^{q/2+1}} \bigg(-\cos \left(\psi-\frac{3\pi}{4}\right) \frac{\psi}{4n^{2\beta}}- \sin  \left(\psi-\frac{3\pi}{4}\right)\bigg)^{q-1}\right)
	\end{split}%
	\end{equation}
	and 
	\begin{equation}\label{gammaq2}
	\begin{split}
	\Gamma_2^q&=\frac{C_{n,\beta}^q}{(4\pi)^q} 2^q \left(\frac{2}{\pi}\right)^{q/2}(\alpha_{n,\beta} n )^{2q}
	\sqrt{\psi}^q (-2)^q
	\sin \left(\frac{\psi}{2}-\frac{3\pi}{4}+\frac{(n+1)\psi}{2\alpha_{n,\beta} n}\right)^q \sin\left(\frac{(n+1)\psi}{2n\alpha_{n,\beta}}-\frac{\psi}{2}\right)^q \\& \left[ \frac{1}{\psi^2}+\frac{1}{(\alpha_{n,\beta}n)^2}\right]^q
	+O\left(C_{n,\beta}^q   \frac{(\alpha_{n,\beta} n )^{2q}}{\psi^{-q/2+2}} \sin \left(\frac{\psi}{2}-\frac{3\pi}{4}+\frac{(n+1)}{2\alpha_{n,\beta} n} \psi\right)^{q-1} \right. \\&\left.\sin\left(\frac{n+1}{2n\alpha_{n,\beta}}\psi-\frac{\psi}{2}\right)^{q-1}\times\left[ \frac{1}{\psi^2}+\frac{1}{(\alpha_{n,\beta}n)^2}\right]^{q-1} \right).
	\end{split}
	\end{equation}
	As far as $I_2$ is concerned, using the Taylor expansion of $\sin \psi/(\alpha_{n,\beta} n)$, the Newton's binomial formula and (\ref{gammaq2}) we get
	\begin{equation}
	\begin{split}
	&I_2=\frac{1}{\alpha_{n,\beta} n } \int_{n^\beta}^{n\alpha_{n,\beta}\pi} \bar{\Gamma}_2^q \sin\frac{\psi}{\alpha_{n,\beta}} \,d\psi=\frac{1}{(\alpha_{n,\beta}n)^2}\int_{n^\beta}^{n\alpha_{n,\beta}\pi}\bar{\Gamma}_2^q \psi\,d\psi\\&+O\left(\frac{1}{(\alpha_{n,\beta}n)^4}\int_{n^\beta}^{n\alpha_{n,\beta}\pi}\bar{\Gamma}_2^q \psi^3\,d\psi\right)= \left(\frac{C_{n,\beta}}{4\pi} \right)^q (2)^q \left(\frac{2}{\pi}\right)^{\frac{q}{2}} (-2)^q \frac{(\alpha_{n,\beta} n)^{2q}}{(\alpha_{n,\beta}n)^2}
	\int_{n^\beta}^{\alpha_{n,\beta} n \pi-\epsilon} \!\!\! \sqrt{\psi}^q \\&\times \sum_{k=0}^{q} \binom{q}{k} \frac{1}{\psi^{2k}} \frac{1}{(\alpha_{n,\beta}n)^{2(q-k)}} \psi \,d\psi
	\times \left(\sin \left(\frac{\psi}{2}-\frac{3\pi}{4}+\frac{(n+1)}{2\alpha_{n,\beta} n} \psi\right) \sin\left(\frac{n+1}{2n\alpha_{n,\beta}}\psi-\frac{\psi}{2}\right)\right)^q \,d\psi\\&
	+ O\left( \frac{1}{(\alpha_{n,\beta} n )^2}  \int_{n^{\beta}}^{\pi n \alpha_{n,\beta}-\epsilon} C_{n,\beta}^q   \frac{(\alpha_{n,\beta} n )^{2q}}{\psi^{-q/2+1}} \sin \left(\frac{\psi}{2}-\frac{3\pi}{4}+\frac{(n+1)}{2\alpha_{n,\beta} n} \psi\right)^{q-1}\right.\\& \times \left. \sin\left(\frac{n+1}{2n\alpha_{n,\beta}}\psi-\frac{\psi}{2}\right)^{q-1}\left[ \frac{1}{\psi^2}+\frac{1}{(\alpha_{n,\beta}n)^2}\right]^{q-1} \,d\psi \right)+ O\left( \frac{1}{(\alpha_{n,\beta}n)^{4}} \int_{n^{\beta}}^{\pi n \alpha_{n,\beta}} \Gamma_2^q \psi^{3} \,d\psi\right).
	\end{split}
	\end{equation}
	Let us focus on $ (\alpha_{n,\beta} n)^{2q-2}\frac{1}{(n^{2-\beta}+2n+1)^q}$. Taylor expansion implies that
	\begin{equation}\label{Taylorexp}
	\begin{split}
	\frac{n^{2q-2}}{(n^{2-\beta})^q} \frac{1}{\left( 1+\frac{2n+1}{n^{2-\beta}}\right) ^q} \left(1-\frac{1}{n^{\beta}}\right)^{q-1}&= \frac{n^{-2}}{n^{-\beta q}} \left(1+O\left(\frac{2n+1}{n^{2-\beta}}\right)\right) \left( 1+O\left(\frac{1}{n^{\beta}}\right)\right) \\&=\frac{n^{-2}}{n^{-\beta q}} \left(1+O\left(\frac{1}{n^{\beta}}\right)+O\left(\frac{1}{n^{1-\beta}}\right)\right).
	\end{split}
	\end{equation}
	Hence for $q \geq 4$, bounding the sine by 1 and solving the integral in $\psi$, we can see that $$I_2\ll C_q\frac{n^{q\beta}}{n^2}  \frac{1}{(\alpha_{n,\beta} n)^{2q}} (\alpha_{n,\beta}n)^{q/2+2} =O\left(\frac{1}{n^2}\right);$$
	while for $q=3$, we have to compute
	\begin{equation}\label{q3}
	\begin{split}
	C_3 \frac{n^{3\beta}}{n^2} &\int_{n^\beta}^{\alpha_{n,\beta} n \pi} \psi^{3/2+1}
	\left( \frac{1}{\psi^6}+ \frac{1}{\psi^4 (\alpha_{n,\beta}n)^2}+ \frac{1}{\psi^2 (\alpha_{n,\beta}n)^4}+\frac{1}{(\alpha_{n,\beta}n)^6} \right)\\&
	\times \sin \left(\psi-\frac{3\pi}{4}+\frac{(n+1)}{2\alpha_{n,\beta} n} \psi \right)^3 \sin\left(\frac{n+1}{2\alpha_{n,\beta}n}\psi-\psi\right)^3 \,d\psi .
	\end{split}
	\end{equation}
	In view of the following formula
	\begin{equation*}
	\begin{split}
	\sin(a)^3\sin(b)^3&= \frac{1}{32} \bigg[-3 \cos(a-3b)+\cos(3a-3b)+9\cos(a-b)-3\cos(3a-b)\\&-9 \cos (a+b)+3\cos(3a+b)+3\cos(a+3b)-\cos(3a+3b)\bigg],
	\end{split}
	\end{equation*}
	integration by parts gives the convergence of the integral in (\ref{q3}) multiplied by $n^{3\beta}$ so that (\ref{q3}) is $O(1/n^2)$.
	%Indeed, the first term, for example, is
	%$$\frac{n^{3\beta}}{n^2} \int_{n^\beta}^{\alpha_{n,\beta} n \pi} 
	%\left( \frac{1}{\psi^6}+ \frac{1}{\psi^4 (\alpha_{n,\beta}n)^2}+ \frac{1}{\psi^2 (\alpha_{n,\beta}n)^4}+\frac{1}{(\alpha_{n,\beta}n)^6} \right)  \psi^{3/2+1} \cos(2\psi-3/4\pi) \,d\psi $$$$=rifai conti$$
	%The other integrals can be done in the same way.\\
	Regarding $I_1$ applying again the Taylor expansion of $\sin\psi/(\alpha_{n,\beta}n)$ and using (\ref{gammaq1}) we get
	\begin{equation}\label{perq3}
	\begin{split}
	&I_1=\frac{1}{\alpha_{n,\beta} n } \int_{1}^{n^\beta} \Gamma_1^q \sin\frac{\psi}{\alpha_{n,\beta}} \,d\psi=\frac{1}{(\alpha_{n,\beta}n)^2}\int_{1}^{n^\beta} \Gamma_1^q \psi\,d\psi+O\left(\frac{1}{(\alpha_{n,\beta}n)^4}\int_{1}^{n^\beta} \Gamma_1^q \psi^3\,d\psi\right)\\&
	= \frac{C_{n,\beta}^q}{(4\pi)^q} \frac{1}{2^q}\left(\frac{2}{\pi}\right)^{q/2}\!\!\! \frac{1}{n^{\beta q}}(\alpha_{n,\beta} n)^{2q-2}  \int_{1}^{n^\beta} \frac{1}{\psi^{q/2}}\psi \bigg(-\cos \left(\psi-\frac{3\pi}{4}\right) \frac{\psi}{4n^{\beta}}- \!\!\sin  \left(\psi-\frac{3\pi}{4}\right) \bigg)^q\!\! \,d\psi\\&+ O \left( C_{n,\beta}^q \frac{(\alpha_{n,\beta} n)^{2q-2}}{n^{\beta q}} \int_{1}^{n^\beta}\frac{1}{\psi^{q/2}} \bigg(-\cos \left(\psi-\frac{3\pi}{4}\right) \frac{\psi}{4n^{2\beta}}- \sin  \left(\psi-\frac{3\pi}{4}\right)\bigg)^{q-1}\,d\psi\right).
	\end{split}
	\end{equation}
	Then, bounding $\bigg|\left(-\cos\left(\psi-\frac{3\pi}{4}\right) \frac{\psi}{4n^{\beta}}- \sin  \left(\psi-\frac{3\pi}{4}\right)\right)^q\bigg|\leq |(\frac{\psi}{4n^{\beta}}+1)^q|$ and using the Newton's binomial formula, we can deduce that, for all $q\geq 4$,
	\begin{equation}
	\begin{split}
	I_1&\sim\frac{C_{n,\beta}^q}{(4\pi)^q} \frac{1}{2^q}\left(\frac{2}{\pi}\right)^{q/2} \frac{1}{n^{\beta q}}(\alpha_{n,\beta} n)^{2q-2} \int_{1}^{n^\beta} \frac{1}{\psi^{q/2-1}}
	\sum_{k=0}^{q} \binom{q}{k} \left( \frac{\psi}{4n^\beta}\right)^k  \,d\psi\\&
	= \frac{C_{n,\beta}^q}{(4\pi)^q} \frac{1}{2^q}\left(\frac{2}{\pi}\right)^{q/2}\frac{1}{n^{\beta q}}(\alpha_{n,\beta} n)^{2q-2}\sum_{k=0}^{q} \binom{q}{k} \left(\frac{1}{4n^\beta}\right)^k \int_{1}^{n^\beta} \psi^{1-q/2+k}  \,d\psi
	\end{split}
	\end{equation}
	and (\ref{Taylorexp}) gives
	\begin{equation*}
	\begin{split}
	I_1&=O\bigg( \frac{1}{n^2} \bigg(\sum_{k=0, k\ne q/2-1,k\ne q/2-2}^{q} \binom{q}{k} \left(\frac{1}{4n^\beta}\right)^k  \psi^{2 -\frac{q}{2}+k}\bigg|_{1}^{n^\beta} +\binom{q}{\frac{q}{2}-1} \left(\frac{1}{4n^\beta}\right)^{\frac{q}{2}-1}(n^\beta-1)\\&+ \binom{q}{q/2-2} \left(\frac{1}{4n^\beta}\right)^{q/2-2} \int_{1}^{n^\beta} \psi^{-1} \,d\psi\bigg) \bigg)=O\left(\frac{1}{n^2}\frac{1}{n^{q\beta/2-2\beta}}\right)+O\left(\frac{1}{n^2}\frac{\log n}{n^{q\beta/2-2\beta}}\right)
	\end{split}
	\end{equation*}
	Note that for $q=4$ we obtain $I_1=O(\log n /n^2)$.
	It remains to study the case $q=3$. Developping the third power in (\ref{perq3}), and in view of (\ref{Taylorexp}) we get
	\begin{equation*}
	\begin{split}
	&
	I_1 \sim \frac{C_q}{n^2}
	\int_{1}^{n^{\beta}} \frac{1}{\sqrt{\psi}} \bigg(\frac{\psi^{3}}{4n^{3\beta}}  \cos \left(\psi -\frac{3\pi}{4}\right)^3- \sin\left(\psi- \frac{3\pi}{4}\right)^{3}\\&- \frac{3\psi^{2}}{16n^{2\beta}}  \cos \left(\psi -\frac{3\pi}{4}\right)^2 \sin\left(\psi- \frac{3\pi}{4}\right)- \frac{3\psi}{4n^{\beta}}  \cos \left(\psi -\frac{3\pi}{4}\right)\sin\left(\psi- \frac{3\pi}{4}\right)^{2}\bigg)\,d\psi.
	\end{split}
	\end{equation*}
	Integrating by parts it can be seen that the integral converges. For example,
	\begin{equation*}
	\begin{split}
	&\int_{1}^{n^\beta} \!\!\!\frac{\sin \left(\psi - \frac{3 \pi}{4} \right)^3}{\psi^{1/2}} \,d\psi = \int_{1}^{n^{\beta}}\!\!\! \frac{\sin \left(\psi - \frac{3 \pi}{4} \right)}{\psi^{1/2}}-\frac{\cos \left(\psi - \frac{3 \pi}{4} \right)^2 \sin \left(\psi - \frac{3 \pi}{4} \right) }{\psi^{1/2}} \,d\psi = \!\!\bigg[ \frac{\cos \left(\psi - \frac{3 \pi}{4} \right)}{\psi^{1/2}}\bigg|_{1}^{n^\beta}\\&- \int_{1}^{\pi n^\beta} \frac{\cos \left(\psi - \frac{3 \pi}{4} \right)}{\psi^{3/2}} \,d\psi  \bigg] -\bigg[ \frac{\cos \left(\psi - \frac{3 \pi}{4} \right)^3}{\psi^{1/2}}\bigg|_{1}^{n^\beta}- \int_{1}^{\pi n^\beta} \frac{\cos \left(\psi - \frac{3 \pi}{4} \right)^3}{\psi^{3/2}} \,d\psi  \bigg] < \infty;
	\end{split}
	\end{equation*}
	Similarly, for the other terms we find $O(\frac{1}{n^{\beta}}n^{\beta/2})$, $O(\frac{1}{n^{2\beta}} n^{3/2\beta})$ and $O(\frac{1}{n^{3\beta}} n^{5/2\beta})$.
	Putting together the results for $I_1$ and $I_2$ with (\ref{2var}) and (\ref{3var}) the thesis of the Theorem follows.
\end{proof}

\section{Proof of Lemma \protect\ref{Lemma1}}
Before proving Lemma \ref{Lemma1}, using the same notation as in \cite{FXA},
we recall the Hilb's asymptotic formula (see \cite{szego}, Theorem 8.21.12):
\begin{equation}  \label{hilbs}
P_n^{(1,0)}(\cos \theta ) = \bigg(\sin \frac{\theta}{2}\bigg)^{-1} \bigg\{ %
\bigg(\frac{\theta}{\sin \theta}\bigg)^{1/2} J_1((n+1)\theta)
+R_{1,n}(\theta) \bigg\}
\end{equation}
where $P_n^{(1,0)}$ is a Jacobi Polynomial, which in general is defined as 
\begin{equation*}
P_n^{(\alpha, \beta)}(x)= \sum_{s=0}^n \binom{n+\alpha}{s} \binom{n+\beta}{%
	n-s} \bigg(\frac{x-1}{2}\bigg)^{n-s}\bigg(\frac{x+1}{2}\bigg)^s,
\end{equation*}
\begin{equation}\label{error}
R_{1,n}(\theta) =%
\begin{cases}
\theta^3 O(n), & 0\leq \theta \leq c/n \\ 
\theta^{1/2} O(n^{-3/2}), & c/n \leq \theta \leq \pi -\epsilon%
\end{cases}%
\end{equation}
and $J_1$ is the Bessel function of order 1.
\begin{proof}[Proof of Lemma \ref{Lemma1}]
	Looking at the covariance function in (\ref{cov}) we can write $\bar{\Gamma}_{\alpha_{n,\beta}}(x,y)$ as \begin{equation}\label{var1}
	\begin{split}
	\bar{\Gamma}_{\alpha_{n,\beta}}(x,y)&= %C_{n,\alpha_{n}} \sum_{\ell=\alpha_{n} n}^n\frac{2\ell+1}{4\pi} P_{\ell}(\langle x,y \rangle)=
	C_{n,\beta} \bigg(\sum_{\ell=0}^n \frac{2\ell+1}{4\pi} P_{\ell}(\langle x,y \rangle)-\sum_{\ell=0}^{n\alpha_{n,\beta}-1} \frac{2\ell+1}{4\pi} P_{\ell}(\langle x,y \rangle )\bigg).
	\end{split}
	\end{equation}
	Thanks to the following formula (\cite{FXA}, page 6), derived by the Christoffel-Darboux formula (\cite{AH12}), 
	\begin{equation}\label{CDformula}
	\sum_{\ell=0}^{n} \sum_{m=-\ell}^{\ell} Y_{\ell, m} (x) Y_{\ell, m} (y)= \frac{n+1}{4\pi} P_n^{(0,1)}(\cos \theta(x,y)),
	\end{equation} 
	and to the addition formula (\cite{M e Peccati} page 66):
	\begin{equation}\label{addition formula}
	\sum_{m=-\ell}^{\ell} Y_{\ell, m} (x) Y_{\ell, m} (y)= \frac{2\ell+1}{4\pi} P_\ell(\cos \theta(x,y)),
	\end{equation}
	we obtain that 
	$$\bar{\Gamma}_{\alpha_{n,\beta}}(\cos \theta)=C_{n,\beta} \bigg[ \frac{n+1}{4\pi} P_n^{(1,0)}(\cos \theta(x,y))-\frac{n\alpha_{n,\beta}}{4\pi} P_{n\alpha_{n,\beta}-1}^{(1,0)} (\cos \theta(x,y)) \bigg].$$
	Applying the Hilb's asymptotics (\ref{hilbs}) we get 
	\begin{equation}
	\begin{split}
	\bar{\Gamma}_{\alpha_{n,\beta}}(\cos \theta)=\frac{C_{n,\beta}}{4\pi}  \left(\sin \frac{\theta}{2}\right)^{-1}&\bigg[ (n+1) \left(\frac{\theta}{\sin \theta}\right)^{1/2} J_1((n+1)\theta)+(n+1)R_{1,n}(\theta)+\\&-n{\alpha_{n,\beta}} \left(\frac{\theta}{\sin \theta}\right)^{1/2} J_1(n \alpha_{n,\beta} \theta)- n\alpha_{n,\beta} R_{1,n\alpha_{n,\beta}}(\theta)  \bigg].
	\end{split}
	\end{equation}
	In view of (\ref{error}) the error term $(n+1)R_{1,n}(\theta)- n \alpha_{n,\beta} R_{1,n\alpha_{n,\beta}}(\theta) $,
	for $\dfrac{c}{n\alpha_{n,\beta}} \leq \theta \leq \pi-\epsilon$, it is equal to
	\begin{equation*}
	%\begin{split}&
	\theta^{\frac{1}{2}} O\!\left(n^{-\frac{3}{2}}\right)(n+1)-n \alpha_{n,\beta} \theta^{\frac{1}{2}} O\!\left(n^{-\frac{3}{2}} \alpha_{n,\beta}^{-3/2}\right)=\theta^{\frac{1}{2}} O\!\left(n^{-\frac{1}{2}}-n^{-\frac{1}{2}} \alpha_{n,\beta}^{-\frac{1}{2}}\right)= O\!\left(\frac{\theta^{\frac{1}{2}}}{\sqrt{n}}\right)=O\left(\frac{1}{\sqrt{n}}\right).
	%\end{split}
	\end{equation*}
	Now, changing variable $\theta= \frac{\psi}{\alpha_{n,\beta} n}$ and exploiting the expansion of the Bessel functions (\cite{szego}, page 15-16),
	\begin{equation}\label{J1}
	J_1 (x) = \bigg( \frac{2}{\pi x} \bigg)^{1/2} \cos \left(x - \frac{3\pi }{4}\right)- \dfrac{3}{4\sqrt{2\pi}x^{3/2}} \sin\left(x - \frac{3\pi }{4}\right)+O(x^{-5/2}), \mbox{ as } x \rightarrow \infty,
	\end{equation}
	we find
	\begin{equation*}
	\begin{split}
	&\bar{\Gamma}_{\alpha_{n}}\left(\cos \frac{\psi}{\alpha_{n,\beta} n}\right)=\frac{C_{n,\beta}}{4\pi}  \bigg(\sin \frac{\psi}{2\alpha_{n,\beta} n}\bigg)^{-1} \bigg\{ \left(\frac{\psi/(n\alpha_{n,\beta})}{\sin \psi/(\alpha_{n,\beta} n)}\right)^{1/2} \bigg[(n+1) \bigg( \bigg( \frac{2\alpha_{n,\beta} n}{\pi (n+1)\psi} \bigg)^{1/2} \\& \times\cos\! \left(\!\frac{\psi(n+1)}{\alpha_{n,\beta} n}-\!\!\frac{3\pi}{4}\right)-\frac{3}{4\sqrt{2\pi}} \left(\frac{1}{\psi}\right)^{3/2}\!\! \left(\frac{\alpha_{n,\beta} n}{n+1}\right)^{3/2}\! \!\!\sin \left(\frac{n+1}{\alpha_{n,\beta} n } \psi -\frac{3\pi}{4} \right)\!\!+\! O\left(\frac{\alpha_{n,\beta} n}{(n+1)\psi}\right)^{\frac{5}{2}} \!\!\bigg)\\&
	-n\alpha_{n,\beta} \bigg(\sqrt{\frac{2}{\pi \psi}} \cos\left(\psi-\frac{3\pi}{4}\right)- \frac{3}{4\sqrt{2\pi}} \left(\frac{1}{\psi}\right)^{3/2} \sin \left( \psi- \frac{3\pi}{4} \right) +O(1/\psi^{5/2}) \bigg) \bigg]+ O\left(\frac{1}{\sqrt{n}}\right)\bigg\}
	\end{split}
	\end{equation*}
	which is equal to
	\begin{equation*}
	\begin{split}
	&\bar{\Gamma}_{\alpha_{n}}\left(\cos \frac{\psi}{\alpha_{n,\beta} n}\right)%=\frac{C_{n,\beta}}{4\pi}  \bigg(\sin \frac{\psi}{2\alpha_{n,\beta} n}\bigg)^{-1} \bigg\{ \left(\frac{\psi/n\alpha_{n,\beta}}{\sin \psi/\alpha_{n,\beta} n}\right)^{1/2} \alpha_{n,\beta} n 
	%\bigg[ 
	%\bigg( \sqrt{\frac{2}{\pi \psi}} \bigg(\frac{n+1}{\alpha_{n,\beta} n} \bigg)^{1/2} \\& \times \cos \left((n+1)\frac{\psi}{\alpha_{n,\beta} n}-3/4\pi\right)-\frac{3}{4\sqrt{2\pi}} \left(\frac{1}{\psi}\right)^{3/2} \left(\frac{\alpha_{n,\beta} n}{n+1}\right)^{1/2} \sin \left(\frac{n+1}{\alpha_{n,\beta} n } \psi -\frac{3\pi}{4} \right)\\&+ O\left(\frac{\alpha_{n,\beta} n}{(n+1)}\right)^{3/2} \frac{1}{\psi^{5/2}} \bigg) \\&
	%-\bigg(\sqrt{\frac{2}{\pi \psi}} \cos \left( \psi- \frac{3\pi}{4} \right)- \frac{3}{4\sqrt{2\pi}} \left(\frac{1}{\psi}\right)^{3/2} \sin \left( \psi- \frac{3\pi}{4} \right) +O(1/\psi^{5/2}) \bigg) \bigg]+ O(1/\sqrt{n})\bigg\}\\&
	=\frac{C_{n,\beta}}{4\pi}  \bigg(\sin \frac{\psi}{2\alpha_{n,\beta} n}\bigg)^{-1} \bigg\{ \left(\frac{\psi/(n\alpha_{n,\beta})}{\sin \psi/(\alpha_{n,\beta} n)}\right)^{1/2} \alpha_{n,\beta} n 
	\bigg[ 
	\sqrt{\frac{2}{\pi \psi}} \bigg( \bigg(\frac{n+1}{\alpha_{n,\beta} n} \bigg)^{1/2} \\& \times\cos \left((n+1)\frac{\psi}{\alpha_{n,\beta} n}-\frac{3\pi}{4}\right) - \cos \left( \psi- \frac{3\pi}{4} \right) \bigg)\\&
	+\frac{3}{4\sqrt{2\pi}} \left(\frac{1}{\psi}\right)^{3/2} \left( \sin\left(\psi-\frac{3\pi}{4}\right)-\left( \frac{\alpha_{n,\beta} n}{n+1}\right)^{1/2} \sin \left(\frac{n+1}{\alpha_{n,\beta} n } \psi -\frac{3\pi}{4} \right)\right)+ O\left(\frac{1}{\psi^{5/2}n^\beta} \right) \bigg]\bigg\}.
	\end{split}
	\end{equation*}
	Using the Taylor expansion 
	\begin{equation}\label{taylor}
	(1+x)^{\gamma}= 1+\gamma x+\frac{\gamma (\gamma-1)}{2}x^2+o(x^2) \mbox{ }\mbox{ }\mbox{ }\mbox{ }\mbox{ } (x \rightarrow 0)
	\end{equation} 
	applied to
	\begin{align*}
	\left(\frac{n+1}{\alpha_{n,\beta} n}\right)^{1/2}\!\!&=&\!\!\left(1+\frac{1}{n}\right)^{1/2}\left(1-\frac{1}{n^{\beta}}\right)^{-1/4}= 1+\frac{1}{4n^{\beta}}+\frac{5}{32n^{2\beta}}+\frac{1}{2n}+o\left(\frac{1}{n^{2\beta}}\right)+o\left(\frac{1}{n}\right),\\
	\left(\frac{\alpha_{n,\beta} n}{n+1}\right)^{1/2}\!\!&=&\!\!\left(1+\frac{1}{n}\right)^{-1/2}\left(1-\frac{1}{n^{\beta}}\right)^{1/4}= 1-\frac{1}{4n^{\beta}}-\frac{3}{32n^{2\beta}}-\frac{1}{2n}+o\left(\frac{1}{n^{2\beta}}\right)+o\left(\frac{1}{n}\right)
	\end{align*}
	we obtain
	\begin{equation}\label{uu}
	\begin{split}
	\bar{\Gamma}_{\alpha_{n}}&\left(\cos \frac{\psi}{\alpha_{n,\beta} n}\right)=\frac{C_{n,\beta}}{4\pi}  \bigg(\sin \frac{\psi}{2\alpha_{n,\beta} n}\bigg)^{-1} \bigg\{ \left(\frac{\psi/(n\alpha_{n,\beta})}{\sin \psi/(\alpha_{n,\beta} n)}\right)^{1/2} \alpha_{n,\beta} n 
	\bigg[ 
	\sqrt{\frac{2}{\pi \psi}} \\& \times\left( \cos \left(\frac{\psi(n+1)}{\alpha_{n,\beta} n}-\frac{3\pi}{4}\right) -  \cos \left( \psi- \frac{3\pi}{4} \right) \right) +\sqrt{\frac{2}{\pi \psi}} \frac{1}{4n^{\beta}} \cos \left((n+1)\frac{\psi}{\alpha_{n,\beta} n}-\frac{3\pi}{4}\right) \\&
	+\frac{3}{4\sqrt{2\pi}} \left(\frac{1}{\psi}\right)^{3/2} \left( \sin\left(\psi-\frac{3\pi}{4}\right)- \sin \left(\frac{n+1}{\alpha_{n,\beta} n } \psi -\frac{3\pi}{4} \right) \right)\\&+ \frac{3}{4\sqrt{2\pi}} \left(\frac{1}{\psi}\right)^{3/2} \frac{1}{4n^{\beta}}\sin\left(\psi\frac{n+1}{n\alpha_{n,\beta}}-\frac{3\pi}{4}\right)+O\left(\frac{1}{\sqrt{\psi}n^{2\beta}}+ \frac{1}{\psi^{3/2}n^{2\beta}}+\frac{1}{\psi^{5/2}n^\beta} \right) \bigg]\bigg\}.
	\end{split}
	\end{equation}
	Now for $n^{\beta} <\psi<\alpha_{n,\beta}n\pi$, using that
	\begin{eqnarray*}
		\cos \!\left(\frac{\psi(n+1)}{\alpha_{n,\beta} n}-\frac{3\pi}{4} \right) -  \cos \left( \psi- \frac{3\pi}{4} \right)\!\!&=&\!\!-2 \sin \left(	\frac{\psi}{2}-\frac{3\pi}{4}+\frac{(n+1)}{2\alpha_{n,\beta} n} \psi\right) \sin \left(\frac{n+1}{n\alpha_{n,\beta}}\psi-\psi\right), \\
		\sin \!\left( \psi- \frac{3\pi}{4} \right)-\sin \left(\frac{\psi(n+1)}{\alpha_{n,\beta} n}-\frac{3\pi}{4} \right)\!\!&=&\!\!2 \cos \left(	\frac{\psi}{2}-\frac{3\pi}{4}+\frac{(n+1)}{2\alpha_{n,\beta} n} \psi\right) \sin\left(\frac{\psi}{2}-\frac{n+1}{2n\alpha_{n,\beta}}\psi\right)
	\end{eqnarray*} we find
	\begin{equation}\label{gamma2}
	\begin{split}
	\bar{\Gamma}_{2}&=
	\frac{C_{n,\beta}}{4\pi}  \bigg(\sin \frac{\psi}{2\alpha_{n,\beta} n}\bigg)^{-1} \left(\frac{\psi/(n\alpha_{n,\beta})}{\sin \psi/(\alpha_{n,\beta} n)}\right)^{1/2} \frac{\alpha_{n,\beta} n}{\sqrt{\psi}} \\&\bigg[\sqrt{\frac{2}{\pi}} (-2) \sin \left(	\frac{\psi}{2}-\frac{3\pi}{4}+\frac{(n+1)}{2\alpha_{n,\beta} n} \psi\right) \sin\left(\frac{n+1}{n\alpha_{n,\beta}}\psi-\psi\right)+O\left( \frac{1}{ \psi} \right)\bigg].
	\end{split}%
	\end{equation}
	Whereas for $\psi \in [1,n^{\beta}] $ applying Taylor expansion to $\frac{n+1}{\alpha_{n,\beta}n}$, exploiting the formulas of sum and difference of angles and calling  $\zeta= \frac{\psi}{2n^{\beta}}+\frac{3\psi}{8n^{2\beta}} +\frac{\psi}{n}+o(\frac{1}{n})+o\left(\frac{1}{n^{2\beta}}\right)$ we have that 
	\begin{eqnarray*}
		\cos \left((n+1)\frac{\psi}{\alpha_{n,\beta} n}-\frac{3\pi}{4}\right)\!\!\!&=&\!\!\! \cos \left( \psi-\frac{3\pi}{4}+\zeta\right)=\cos \left(\psi-\frac{3\pi}{4}\right) \cos \zeta- \sin \left(\psi-\frac{3\pi}{4}\right) \sin \zeta,
		\\
		\sin \left((n+1)\frac{\psi}{\alpha_{n,\beta} n}-\frac{3\pi}{4}\right)\!\!\!&=&\!\!\! \sin \left( \psi-\frac{3\pi}{4}+\zeta\right)=\sin \left(\psi-\frac{3\pi}{4}\right) \cos \zeta+ \cos \left(\psi-\frac{3\pi}{4}\right) \sin \zeta.
	\end{eqnarray*} 
	Plugging these formulas in (\ref{uu}), 
	%
	%we get
	%\begin{equation}\label{4.13}
	%\begin{split}
	%\bar{\Gamma}_{1}&=\frac{C_{n,\beta}}{4\pi}  \bigg(\sin \frac{\psi}{2\alpha_{n,\beta} n}\bigg)^{-1} \bigg\{ \left(\frac{\psi/n\alpha_{n,\beta}}{\sin \psi/\alpha_{n,\beta} n}\right)^{1/2} \alpha_{n,\beta} n \bigg[ \sqrt{\frac{2}{\pi\psi}} \\&
	%\times \bigg(\cos \left(\psi-\frac{3\pi}{4}\right) \cos \zeta- \sin \left(\psi-\frac{3\pi}{4}\right) \sin \zeta-\cos \left(\psi-\frac{3\pi}{4}\right) \bigg)
	%\\&+\sqrt{\frac{2}{\pi\psi}} \frac{1}{4n^{\beta}}
	%\times \left(\cos \left(\psi-\frac{3\pi}{4}\right) \cos \zeta- \sin \left(\psi-\frac{3\pi}{4}\right) \sin \zeta  \right) \\&
	%+\frac{3}{4\sqrt{2\pi} \psi^{3/2}} \bigg( \sin (\psi-\frac{3\pi}{4})- \left( \sin \left(\psi-\frac{3\pi}{4}\right) \cos \zeta+ \cos \left(\psi-\frac{3\pi}{4}\right) \sin \zeta  \right) \bigg)\\&
	%+\frac{3}{4\sqrt{2\pi} \psi^{3/2}}\frac{1}{4n^{\beta}} \left( \sin \left(\psi-\frac{3\pi}{4}\right) \cos \zeta+ \cos \left(\psi-\frac{3\pi}{4}\right) \sin \zeta  \right) +
	%O\left( \frac{1}{\psi^{1/2}n^{2\beta}}+\frac{1}{\psi^{3/2}n^{2\beta}} +\frac{1}{\psi^{5/2}n^{\beta}} \right) \bigg]\bigg\}
	%\end{split}%
	%\end{equation}
	using the fact that $\cos \zeta=1-\frac{\zeta^2}{2}+O(\zeta^4)$ and  $\sin \zeta=\zeta+ O(\zeta^3)$, as $\zeta \to 0$,
	%(\ref{4.13}) is
	we get 
	\begin{equation}\label{gamma1}
	\begin{split}
	\bar{\Gamma}_{1}&=
	\frac{C_{n,\beta}}{4\pi}  \bigg(\sin \frac{\psi}{2\alpha_{n,\beta} n}\bigg)^{-1} \bigg\{ \left(\frac{\psi/(n\alpha_{n,\beta})}{\sin \psi/(\alpha_{n,\beta} n)}\right)^{1/2} \alpha_{n,\beta} n 
	\bigg[ \sqrt{\frac{2}{\pi\psi}} \cos \left(\psi-\frac{3\pi}{4}\right)\\& (-1)\frac{\psi^2}{8n^{2\beta}}+O\left(\frac{1}{\sqrt{\psi}} \frac{\psi^2}{n^{3\beta}}\right)- \sqrt{\frac{2}{\pi\psi}} \zeta	\sin  \left(\psi-\frac{3\pi}{4}\right) +\sqrt{\frac{2}{\pi\psi}} \frac{1}{4n^{\beta}} \cos \left(\psi-\frac{3\pi}{4}\right)  \\&-\sqrt{\frac{2}{\pi}}\frac{1}{4n^\beta} \zeta \sin\left(\psi-\frac{3\pi}{4}\right)
	+\frac{3}{4 \sqrt{2\pi}} \frac{1}{\psi^{3/2}} 
	\bigg(\sin\left(\psi-\frac{3\pi}{4}\right)\frac{\psi^2}{8n^{2\beta}}+O\left(\frac{\psi^2}{n^{3\beta}}\right)  \\&-\zeta \cos \left(\psi-\frac{3\pi}{4}\right) +\frac{1}{4n^\beta}\sin \left(\psi-\frac{3\pi}{4}\right) \bigg)+O\left(\frac{\sqrt{\psi}}{n^{2\beta}}+ \frac{1}{\psi^{3/2} n^\beta}\right)+O\left(\frac{\psi^{3/2}}{n^{3\beta}}\right) \bigg]\bigg\}
	\\&=
	\frac{C_{n,\beta}}{4\pi}  \bigg(\sin \frac{\psi}{2\alpha_{n,\beta} n}\bigg)^{-1} \left(\frac{\psi/(n\alpha_{n,\beta})}{\sin \psi/(\alpha_{n,\beta} n)}\right)^{1/2} \alpha_{n,\beta} n \frac{\sqrt{\psi}}{n^\beta}\\& \times \sqrt{\frac{2}{\pi}} \frac{1}{2} \bigg(-\cos \left(\psi-\frac{3\pi}{4}\right) \frac{\psi}{4n^{\beta}}- \sin  \left(\psi-\frac{3\pi}{4}\right) +O\left( \frac{1}{ \psi} \right)\bigg) .
	\end{split}%
	\end{equation}
	Taylor expansion of $1/\sin x $ and $ \sin x$, as $x \to 0,$ implies that
	$$\frac{1}{\sin \psi/(n\alpha_{n,\beta})} \left( \frac{\psi/(\alpha_{n,\beta}n)}{\sin \psi/(n\alpha_{n,\beta})}\right)^{1/2}=\frac{\alpha_{n,\beta} n}{\psi}+  \frac{\psi}{\alpha_{n,\beta}6}+O\left(\frac{\psi^3}{(\alpha_{n,\beta} n )^3}\right)$$
	which replaced in (\ref{gamma1}) and (\ref{gamma2}) gives 
	%\begin{equation*}  
	%\begin{split}
	%\bar{\Gamma}_1&
	%=\frac{C_{n,\beta}}{4\pi}  \frac{(\alpha_{n,\beta} n)^2}{\psi}  \bigg[  \frac{\sqrt{\psi}}{n^\beta} \sqrt{\frac{2}{\pi}} \frac{1}{2} \bigg(-\cos \left(\psi-\frac{3\pi}{4}\right) \frac{\psi}{4n^{\beta}}- \sin  \left(\psi-\frac{3\pi}{4}\right) +O\left( \frac{1}{ \psi} \right)\bigg)\bigg]\\&+ O \left( C_{n,\beta}\psi \bigg[\frac{\sqrt{\psi}}{n^\beta} \sqrt{\frac{2}{\pi}} \frac{1}{2} \bigg(-\cos \left(\psi-\frac{3\pi}{4}\right) \frac{\psi}{4n^{\beta}}- \sin  \left(\psi-\frac{3\pi}{4}\right)\bigg)\bigg] \right);
	%\end{split}%
	%\end{equation*}
	%while, for $\psi \in [n^{\beta},\alpha_{n,\beta} n (\pi-\epsilon)]$ it is 
	%\begin{equation*}
	%\begin{split}
	%\bar{ \Gamma}_2&=\frac{C_{n,\beta}}{4\pi} 2 \frac{(\alpha_{n,\beta} n )^2}{\psi} 
	%\bigg[ 
	%\sqrt{\frac{2}{\pi \psi}}\left( -2 \sin \left(\frac{\psi}{2}-\frac{3\pi}{4}+\frac{(n+1)}{2\alpha_{n,\beta} n} \psi\right) \sin\left(\frac{n+1}{2n\alpha_{n,\beta}}\psi-\frac{\psi}{2}\right)\right) + O\left(\frac{1}{\psi^{3/2}}\right)  \bigg]\\&
	%+\frac{C_{n,\beta}}{4\pi} \psi \bigg[ 
	%\sqrt{\frac{2}{\pi \psi}}\left( -2 \sin \left(\frac{\psi}{2}-\frac{3\pi}{4}+\frac{(n+1)}{2\alpha_{n,\beta} n} \psi\right) \sin\left(\frac{n+1}{2n\alpha_{n,\beta}}\psi-\frac{\psi}{2}\right)\right)  \bigg]
	%\end{split}
	%\end{equation*}
	%which give 
	the thesis of the Lemma.
\end{proof}

\subsection*{Acknowledgements}
This topic was suggested by Domenico Marinucci and Igor Wigman. The author
would like to thank them also for some useful discussions. Thanks also to an anonimous referee for some comments. The author has been supported by the German Research Foundation (DFG) via RTG 2131 and by GNAMPA-INdAM (project: \emph{Stime asintotiche: principi di invarianza e grandi deviazioni}).

%%%%%%%%%%%%%%%%%%%%%%%%%%%%%%%%%%%%%%%%%%%%%%%%%%%%%%%%%%%%%%%%%%%
%%                                                               %%
%% Use the two commands below for producing your bibliography    %%
%% with bibtex, then comment again the commands and include the  %%
%% content of the .bbl file in this file below the commands.     %%
%%                                                               %%
%%%%%%%%%%%%%%%%%%%%%%%%%%%%%%%%%%%%%%%%%%%%%%%%%%%%%%%%%%%%%%%%%%%

%\bibliographystyle{amsplain}
%\bibliography{yourbibfilename}

% add below the content of your .bbl file produced by bibtex.

%%%%%%%%%%%%%%%%%%%%%%%%%%%%%%%%%%%%%%%%%%%%%%%%%%%%%%%%%%%%%%%%%%%
%%                                                               %%
%% You may add acknowledgments (optional).                       %%
%%                                                               %%
%%%%%%%%%%%%%%%%%%%%%%%%%%%%%%%%%%%%%%%%%%%%%%%%%%%%%%%%%%%%%%%%%%%

%\ACKNO{We are grateful to Martin Hairer who provided a nice \texttt{MR} macro and to S\'ebastien Gou\"ezel for his useful comments on the internals of the class file.}

%%%%%%%%%%%%%%%%%%%%%%%%%%%%%%%%%%%%%%%%%%%%%%%%%%%%%%%%%%%%%%%%%%%
%%                                                               %%
%% You have reached the end of your document.                    %%
%%                                                               %%
%%%%%%%%%%%%%%%%%%%%%%%%%%%%%%%%%%%%%%%%%%%%%%%%%%%%%%%%%%%%%%%%%%%

\end{document}